\newcommand{\comment}[1]{}
\providecommand{\U}[1]{\protect\rule{.1in}{.1in}}
\numberwithin{equation}{section}
\theoremstyle{plain}
\newtheorem{thm}{Theorem}[section]%07¨´01¡Â09¡§08¨ª03¡¤060605010709070703¡ã09¡§08¨ª03¡À¡ã¨¹08¡§09¡§08¨ª09¡§0602060508¨ª010008¨ª01¨¹000908060305
\newtheorem{lemma}[thm]{Lemma}
\newtheorem{prop}[thm]{Proposition}
\newtheorem{cor}[thm]{Corollary}
\theoremstyle{definition}
\newtheorem{defn}[thm]{Definition}
\newtheorem{ex}[thm]{Example}
\newtheorem{rem}[thm]{Remark}
\newtheorem{claim}{Claim}
\theoremstyle{definition}
\newtheorem{step}{Step}
\begin{document}
	%08¡Â0701 0308080604040805080202¨º¡ã¨¹
	%	\linenumbers

	\title[Criteria for a fiberwise Fujiki/K\"ahler family  to be locally Moishezon/ projective]{Criteria for a fiberwise Fujiki/K\"ahler family  to be locally Moishezon/projective}

	\author[Jian Chen]{Jian Chen}

	\address{Jian Chen, School of Mathematics and Statistics, Central China Normal University,
		Wuhan 430079, People's Republic of China}
	\email{jian-chen@ccnu.edu.cn}

	% \thanks{The  author is partially supported by NSFC (Grant No. 12271412, W2441003)}	

	\subjclass[2020]{Primary 14B12; Secondary 32J27,  14D05, 14D06,  14A10.}
	%\subjclass[2010]{Primary 32D15; Secondary 32L10, 32Q15, 32T35}
	\keywords{Local deformation theory; Compact K\"ahler manifolds, Structure of families,  Fibrations,  Varieties and morphisms.}

	\begin{abstract}		
		In this paper, 
		we utilize the theory of non-K\"ahler loci by S. Boucksom to construct an integral $2$-cohomology class whose restriction to a general fiber is big, and then construct a relatively big line bundle via the exponential sequence. This leads to a local Moishezonness criterion for fibrations whose total spaces are in Fujiki class $\mathscr C$,  generalizing the bimeromorphic version of F. Campana’s local projectivity theorem.  We further combine a similar idea  with the singular Demailly--P\u{a}un theorem by T. Collins--V. Tosatti to obtain a local projectivity criterion for fibrations from compact K\"ahler manifolds,  yielding a new proof and a generalization of F. Campana's local projectivity theorem.       
	\end{abstract}
	\maketitle

	\setcounter{tocdepth}{2}
	\tableofcontents

	\section{Introduction}\label{s introduction}

	In complex differential geometry and complex algebraic geometry, global positivity plays a crucial role, particularly in the study of deformation families. 	
	Projective morphisms and their birational version, Moishezon morphisms, are fundamental objects of study because they inherently carry such  positivity properties. These morphisms exhibit many interesting and important features (e.g.,  \cite{Ny04}  \cite{Lz04}).  
	A concrete motivation for the present paper comes from plurigenera: their deformation invariance is known for smooth locally projective families \cite{Si98}, \cite[Corollary 0.3]{Si02},  \cite{Pa07}, and more generally for locally Moishezon families with canonical singularities \cite[Theorem 1.1]{Tk07}, \cite[Main theorem 1.2]{RT22}. In the K\"ahler setting, Y.-T. Siu conjectured \cite[Conjecture 2.1]{Si02a} that, for a smooth family $\pi:X\to \Delta$ with K\"ahler fibers or with K\"ahler total space, $h^0(X_t,mK_{X_t})$ is independent of $t$ for each positive integer $m$; earlier, M. Levine obtained invariance results for families whose fibers are in Fujiki class $\mathscr{C}$ under additional assumptions \cite{Lv83, Lv85}, and more recently J. Cao and M. P\u{a}un obtained related results for K\"ahler families \cite{CP23}. These results suggest that plurigenera should remain deformation invariant for families whose fibers are in Fujiki class $\mathscr{C}$ and have canonical singularities, thereby motivating the study on the  local Moishezonness and local projectivity criteria  in the present paper.

	We focus in Section \ref{sec-generic-relative-big-line-bundles} on investigating how far a fiberwise Fujiki family  is from being locally Moishezon. 	  We utilize the theory of non-K\"ahler loci by S. Boucksom \cite{B04} to construct an integral $2$-cohomology class whose restriction to a general fiber is big, and then construct a relatively big line bundle via the exponential sequence. This leads to a local Moishezonness criterion for fibrations whose total spaces are in Fujiki class $\mathscr C$, establishing and  generalizing the bimeromorphic version of Campana’s local projectivity theorem in \cite{Ca20} (see Corollary \ref{coro-local-moishezon-criterion-hyperfujiki-symplectic-form}).
	
	\begin{thm}[{= Theorem  \ref{thm-local-moishezon-criterion}}]
		Let $Z$ be a  complex manifold in Fujiki class $\mathscr{C}$, and let $S$ be a normal connected complex
		analytic space. Let $f:Z\to S$ be a proper surjective holomorphic map with
		connected fibers, and denote the fiber over $t\in S$ by $Z_t:=f^{-1}(t)$.
		Assume that the following conditions hold:
		\begin{enumerate}[\rm{(}1\rm{)}]
			\item\label{cond-zero-one-fiber}
			there exists a point $s\in S_0$ such that the restriction map
			\[
			H^0(Z,\Omega_Z^2)\to H^0(Z_s,\Omega_{Z_s}^2)
			\]
			is zero, where $S_0:=D\cap S_{\rm{reg}}$
			with  $S_{\rm{reg}}$  the smooth locus of $S$ and $D$  the maximal  analytic Zariski open subset of  $S$  such that $f$ is flat and submersive over $D$ (note that $S_0$ is analytically Zariski open in $S$);
			\item\label{cond-torsion-free-r2}
			$R^2f_*\mathcal O_Z$ is torsion-free.
		\end{enumerate}
		Then  $f$ is locally Moishezon. 
	\end{thm}

	Campana \cite{Ca20} recently established a local projectivity criterion for Lagrangian fibrations from hyperk\"ahler manifolds. Moreover, B. Claudon and A. Höring \cite{CH24} recently established criteria for the global projectivity of morphisms between certain compact K\"ahler spaces. Motivated by these results, we focus in Section \ref{s projective-prime} on investigating how far a fiberwise K\"ahler family is from being locally projective.
	
	We combine an idea similar to the one used above for the  local Moishezonness with the singular Demailly--P\u{a}un  theorem of T. Collins and V. Tosatti to obtain a local projectivity criterion for fibrations from compact K\"ahler manifolds, yielding a new proof and a generalization of Campana's local projectivity theorem.

	\begin{thm}[{= Theorem \ref{thm-local-relative-ample-line-bundle}}]
		Let $Z$ be a compact K\"ahler manifold, and let $S$ be a  locally
		irreducible
		and connected complex analytic space. Let  $f:Z\to S$
		be a proper surjective holomorphic map with connected    fibers, and denote the
		fiber over $t\in S$ by $Z_t:=f^{-1}(t)$. Assume that the following conditions
		hold:
		\begin{enumerate}[\rm{(}1\rm{)}]
			\item\label{condition-zero-one-fiber-ample} there exists a point $s\in S_0$ such
			that the restriction map
			\[
			H^0(Z,\Omega_Z^2)\to H^0(Z_s,\Omega_{Z_s}^2)
			\]
			is zero, where $S_0:=D\cap S_{\rm{reg}}$  with  $S_{\rm{reg}}$  the smooth locus of $S$ and $D$  the maximal  analytic Zariski open subset of  $S$  such that $f$ is flat and submersive over $D$ (note that $S_0$ is analytically Zariski open in $S$);
			\item\label{condition-torsion-free-ample} $R^2f_*\mathcal O_Z$ is torsion-free.
		\end{enumerate}
		Then $f$ is locally projective. 
	\end{thm}

	Note that the local Moishezonness and local projectivity criteria in this paper are obtained by a unified strategy: we construct an integral $2$-cohomology class and then verify the relevant positivity of its restriction to the fibers. In the Moishezon case, this positivity is bigness and is checked by using the theory of non-K\"ahler loci; in the projective case, it is ampleness and is checked by using the singular Demailly--P\u{a}un theorem of Collins--Tosatti. In particular, in the proof of the local projectivity criterion, the singular Demailly--P\u{a}un theorem is used to construct directly, near each fiber, a K\"ahler form representing the Chern class of the constructed line bundle. This gives a method different from applying the Nakai--Moishezon criterion to check the ampleness on a fiber, whose projectivity is not known, a priori.

	\section{Preliminaries}\label{section preli}
	Unless otherwise stated, throughout this paper: all complex analytic
	spaces (equipped with  the complex  topology) are assumed to be Hausdorff, of pure dimension, and have a countable topology (and are therefore metrizable, paracompact, and countable at infinity, as shown in \cite[51 A. 2 Proposition]{KK83});  all topology notions (e.g., open, closed, dense) are assumed to be 
	w.r.t. the complex  topology; the term   general point/fiber     refers to points/fibers in/over a nonempty Zariski open subset; a complex space $X$ is said to be locally irreducible if   the local ring $\mathcal{O}_{X,x}$ is an integral domain for any $x\in X$;
	For a cohomology class or a cohomology  group $\bullet$ on $X$, we  denote by $\bullet_{\mathbb R}$ its image in
	$H^*(X,\mathbb R)$; We always 
	identify $H^2(X,\mathbb R)\cong H^2_{\rm dR}(X,\mathbb R)$ with its image in $H^2(X,\mathbb C)\cong H^2_{\rm dR}(X,\mathbb C)$ when $X$ is a compact complex manifold.     
	We now  recall some standard notions and results used in the proofs of the present paper.

	Let  $X$ be  a compact complex manifold.  Set  $H^{1,1}(X, \mathbb{R})$ to be the real Bott--Chern $(1,1)$-cohomology group:
	$$H^{1,1}(X, \mathbb{R}):=H_{\mathrm{BC}}^{1,1}(X, \mathbb{R}):=\frac{\left\{\theta \in A^{1,1}(X)_{\mathbb{R}} \mid d \theta=0\right\}}{\left\{\sqrt{-1} \partial \bar{\partial} \varphi \mid \varphi \in C^{\infty}(X, \mathbb{R})\right\}}.$$
	Let 
	$H_{\rm dR}^{p,q}(X)\subset H^{p+q}(X,\mathbb C)$ denote the subspace consisting of
	deRham classes represented by $d$-closed $(p,q)$-forms. Set
	\[
	H_{\rm dR}^{1,1}(X,\mathbb R)
	:=
	H_{\rm dR}^{1,1}(X)\cap H^2(X,\mathbb R),
	\]
	where the intersection is taken inside $H^2(X,\mathbb C)$. Note that when
	$X$ satisfies the $\partial\overline{\partial}$-lemma (sometimes referred to as a $\partial\overline{\partial}$-manifold), then   $H^{1,1}(X, \mathbb{R})$   is  canonically isomorphic to  $H_{\mathrm{dR}}^{1,1}(X, \mathbb{R})$.

	\begin{defn}[{e.g., \cite[Definition 1.6]{DP04}}]
		Let  $X$ be  a compact complex manifold.
		A \emph{K\"ahler current} on  $X$ is a closed positive current $T$ of bidegree $(1,1)$ which satisfies $T \geq \varepsilon \omega$ for some $\varepsilon>0$ and some smooth  Hermitian form $\omega$ on $X$.
		We refer to a class in $H^{1,1}(X, \mathbb{R})$ that is represented by a K\"ahler current  as a \emph{big class}, and the set (which is clearly a cone) of all big  classes as the \emph{big cone}.   
	\end{defn}
	
	\begin{defn}[{e.g., \cite[Chapter VI,  (12.10) Definition]{Dm12}}]
		Let  $X$ be  a compact connected complex manifold. $X$ is called   \emph{ in Fujiki class $\mathscr{C}$} (also referred to  as a \emph{Fujiki manifold} interchangeably) if there exists a   proper modification  $\tilde{X}\to X$ from a 
		compact K\"ahler manifold $\tilde{X}$.  \footnote{By eliminating the indeterminacy, one can   see that $X$ is in {Fujiki class $\mathscr{C}$} if and only if  $X$ is bimeromorphic  to a compact K\"ahler manifold.  }  In particular, a Fujiki manifold    satisfies the $\partial\bar{\partial}$-lemma.
	\end{defn}
	
	By pushing out a  K\"ahler form on the   K\"ahler modification of a Fujiki manifold, one can obtain a  K\"ahler current on the Fujiki manifold.
	
	\begin{lemma}[{e.g., \cite[Theorem 3.4]{DP04}}]\label{lemma-fujiki}
		A compact connected complex manifold $X$ is in {Fujiki class $\mathscr{C}$} if and only if it admits a big class (or  a K\"ahler current).
	\end{lemma}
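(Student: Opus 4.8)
The plan is to prove the two implications separately: the forward direction is a direct pushforward construction, while the reverse direction rests on regularization of currents together with resolution of singularities.

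For the ``only if'' direction, suppose $\pi\colon \tilde X\to X$ is a proper modification with $\tilde X$ compact K\"ahler, and fix a K\"ahler form $\tilde\omega$ on $\tilde X$. First I would set $T:=\pi_*\tilde\omega$, which is a closed positive $(1,1)$-current on $X$: indeed $\pi$ is proper and holomorphic, and pushforward preserves closedness, positivity and bidegree (since $\dim\tilde X=\dim X$). To check that $T$ is genuinely a K\"ahler current, fix a smooth Hermitian metric $\omega$ on $X$; then $\pi^*\omega$ is a smooth semi-positive form, and by compactness there is $\varepsilon>0$ with $\tilde\omega\ge \varepsilon\,\pi^*\omega$ on $\tilde X$. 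Pushing this inequality forward and using $\pi_*\pi^*\omega=\omega$ (valid because $\pi$ is biholomorphic off a measure-zero set and $\pi^*\omega$ is locally bounded, so no mass concentrates on the exceptional locus) yields $T\ge \varepsilon\,\omega$. Hence $\{T\}$ is a big class and $X$ carries a K\"ahler current.

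For the ``if'' direction, assume $X$ carries a K\"ahler current $T_0\ge\varepsilon\omega$. The first step is to invoke Demailly's regularization theorem (the tool underlying \cite{DP04}): any closed $(1,1)$-current dominating a smooth form can be approximated in its cohomology class by currents with analytic singularities, with arbitrarily small loss of positivity. Since $T_0\ge\varepsilon\omega$ strictly, choosing the loss small enough produces a K\"ahler current $T=\theta_0+\tfrac{i}{2\pi}\partial\bar\partial\varphi\ge\tfrac{\varepsilon}{2}\omega$ with analytic singularities, i.e.\ $\varphi$ is locally of the form $c\log\sum_j|g_j|^2+(\text{smooth})$ for a coherent ideal sheaf $\mathcal I=(g_j)$ and $c>0$. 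Next I apply Hironaka's principalization to $\mathcal I$: there is a composition of blow-ups with smooth centers $\pi\colon\tilde X\to X$, an isomorphism over $X\setminus V(\mathcal I)$, with $\pi^*\mathcal I=\mathcal O_{\tilde X}(-D)$ for an effective simple-normal-crossings divisor $D$. Pulling back turns the analytic singularities into divisorial ones, so $\pi^*T=\theta+c[D]$ with $\theta$ a \emph{smooth} closed real $(1,1)$-form. Moreover $\pi^*T\ge \tfrac{\varepsilon}{2}\pi^*\omega$ forces $\theta\ge\tfrac{\varepsilon}{2}\pi^*\omega\ge 0$ on $\tilde X\setminus\mathrm{supp}(D)$, hence everywhere by continuity.

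It remains to upgrade the semi-positive $\theta$ to a genuine K\"ahler form on $\tilde X$, which is the crux. The form $\pi^*\omega$ degenerates exactly along the vertical (fiber) directions of the exceptional locus, and these are precisely the directions in which $\theta$ may fail to be strictly positive. Using the standard fact that an iterated blow-up of a K\"ahler manifold along smooth centers is K\"ahler, I would produce a smooth closed real $(1,1)$-form $\eta$, supported near the exceptional locus and representing a suitable negative combination of the exceptional classes, that is strictly positive in all exceptional fiber directions, so that $\pi^*\omega+\delta'\eta>0$ for small $\delta'>0$. Since $\theta\ge\tfrac{\varepsilon}{2}\pi^*\omega$ with a definite gap in the horizontal directions while $\eta$ is bounded, for $\delta>0$ small the form $\theta+\delta\eta\ge\tfrac{\varepsilon}{2}\pi^*\omega+\delta\eta>0$ is K\"ahler on $\tilde X$. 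Thus $\tilde X$ is compact K\"ahler and $\pi$ exhibits $X$ as a modification of it, so $X$ lies in Fujiki class $\mathscr C$. I expect the main obstacle to be exactly this last step: reconciling the semi-positivity of $\theta$ with the negativity of the exceptional classes and building $\eta$ simultaneously for all centers of the iterated blow-up, which is where the regularization and the resolution must be made compatible.
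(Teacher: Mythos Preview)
The paper does not prove this lemma; it is stated as a preliminary result with a citation to \cite[Theorem 3.4]{DP04}, preceded only by the one-line remark ``By pushing out a K\"ahler form on the K\"ahler modification of a Fujiki manifold, one can obtain a K\"ahler current on the Fujiki manifold,'' which is exactly your forward direction. There is thus no in-paper proof to compare against.

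Your argument is correct and is essentially the Demailly--P\u{a}un proof that the paper invokes. Two minor points of precision: (i) after principalization the decomposition should read $\pi^*T=\theta+\sum_j a_j[D_j]$ with possibly distinct nonnegative weights $a_j$ on the components of $D$, not a single $c[D]$, though this changes nothing in the rest of the argument; (ii) your appeal to ``the standard fact that an iterated blow-up of a K\"ahler manifold along smooth centers is K\"ahler'' is phrased misleadingly, since $X$ is not assumed K\"ahler---what you actually use, and correctly describe afterwards, is the purely local construction of a closed $(1,1)$-form $\eta$ from curvatures of $\mathcal O(-E_j)$ that is strictly positive along the exceptional fiber directions, and this construction requires no K\"ahler hypothesis on the base. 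The obstacle you flag at the end (building $\eta$ for all centers at once) is handled by the usual induction on the tower of blow-ups, choosing the coefficient of each new exceptional form small relative to the previously constructed positive form; once that is said, your sketch is complete.
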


	\begin{lemma}[{e.g., \cite[Lemma 2.1]{C22}}]\label{big cone open}
		Let $X$ be a Fujiki manifold. Then the big cone of $X$ is a
		nonempty open subset of $H^{1,1}(X,\mathbb R)$; via the canonical isomorphism given by the $\partial\bar\partial$-lemma, it may also be regarded as an open cone in $H^{1,1}_{\rm dR}(X,\mathbb R)$.   
	\end{lemma}
	
	\begin{defn}[{(\cite[Definition 3.16]{B04})}]\label{def-nonkah}
		Let $\alpha$  be a big class of a Fujiki manifold $X$.  The \emph{non-K\"ahler locus} $E_{n K}(\alpha)$ of $\alpha$  is defined to be 
		$$
		E_{n K}(\alpha):=\bigcap_{T \in \alpha} E_{+}(T),
		$$
		where $E_{+}(T)$ denotes the set of points of $X$ such that the K\"ahler current $T$ has positive Lelong numbers, and $T$ ranges over all  K\"ahler current representatives of the class $\alpha$.  Clearly, the non-K\"ahler locus of a K\"ahler class is empty. Moreover,  the non-K\"ahler  locus is an analytic subset of $X$ (\cite[Theorem 3.17-(ii)]{B04}).
	\end{defn}

	\begin{lemma}[{\cite[Theorem 4.6]{JS93}}]\label{ji-shiff-big}
		Let $L$ be a holomorphic line bundle on a compact complex manifold $M$. Then $L$ is big if and only if $L$ has a singular Hermitian metric (in the sense of Demailly) $h$ such that the curvature current $c_1(L, h)$ is  a K\"ahler current.
	\end{lemma}

	\begin{defn}\label{def-proj morp}
		Let $f:X\to Y$ be a proper surjective holomorphic map of complex analytic spaces. We say that $f$ is \emph{projective} if there exists an $f$-ample line bundle on $X$, that is, a line bundle whose restriction to each fiber of $f$ is ample (for many equivalent characterizations, we refer the reader to \cite[p. 25]{Ny04}); we say that $f$ is \emph{locally projective} if for each point $y\in Y$, there exists an open neighborhood $U_y$ of $y$ such that the restricted map $f^{-1}(U_y)\to U_y$ is projective. 
	\end{defn}
	
	\begin{defn}\label{def-loc-moish}
		Let $p:X\to S$ be a proper surjective holomorphic map of complex analytic spaces.  $p$ is said to be \emph{Moishezon} if it is bimeromorphically   equivalent over $S$ to a  projective morphism
		$q:Y\to S$, i.e., there exists a bimeromorphic   map $g:X\dashrightarrow Y$ such that $p=q\circ g$;     $p$ is called \emph{locally Moishezon},  if every point $s\in S$ has an open   neighborhood $W_s$ 
		such that    $p$ is bimeromorphic to a projective morphism over $W_s$.
	\end{defn}

	In this paper, we  define the smooth family  (morphism)   as follows.

	\begin{defn}\label{defn-smoothfamily}
		A \emph{smooth family (morphism)} is defined as    a proper submersive holomorphic map with connected fibers between two connected complex manifolds (not necessarily compact).   In particular, Ehresmann's theorem applies to a smooth morphism locally.
	\end{defn}
	
	\rem A smooth family is automatically flat by the analytic version of the Sard theorem (e.g., \cite[Theorem 1.14 or Proposition 2.15]{PR94}), and thus Definition \ref{defn-smoothfamily}  coincides with the usual definition for a \emph{smooth (i.e., flat and submersive) morphism} (e.g., \cite[p. 114]{PR94}).  Furthermore, a smooth family is automatically surjective, by the openness of a flat morphism (e.g., \cite[Corollary 2.12]{PR94}) and the proper mapping theorem;  Note that, for a smooth morphism in the sense of \cite[p. 114]{PR94}, the source and the target need not be smooth. Thus, in the present paper, when we want to obtain a smooth family in the sense of Definition \ref{defn-smoothfamily} from the generic smoothness theorem (e.g., \cite[Theorems 1.21, 1.22, 2.8]{PR94}), we always intersect with the smooth locus of the base.

	We now recall the torsion-freeness theorems of  Takegoshi and Koll\'ar, respectively.
	
	\begin{lemma}[{\cite[Theorem 24]{Kol22}}]\label{kollarfree-preli}
		Let $g: X \to S$ be a smooth and proper morphism of complex analytic spaces. Assume that $H^i\left(X_s, \mathbb{C}\right) \rightarrow H^i\left(X_s, \mathcal{O}_{X_s}\right)$ is surjective for every $i\in \mathbb{N}$ for some $s \in S$. Then $R^i g_* \mathcal{O}_X$ is locally free in a neighborhood of $s$ for every $i\in \mathbb{N}$.
	\end{lemma}
	
	\begin{lemma}[{\cite[Theorem 6.5]{Tk95}}]\label{Take95-torsionfree}
		Let $f: X \to Y$ be a proper surjective morphism from a connected complex manifold $X$ to a reduced and irreducible analytic space $Y$.   Let $(E, h)$ be a Nakano semi-positive holomorphic vector bundle on $X$.  Assume that $f$ is bimeromorphic to a proper locally K\"ahler morphism (e.g., \cite[Definition 6.1]{Tk95}), then
		$$
		R^q f_*\left(K_X \otimes E\right)
		$$
		is torsion free for every $q \geq 0$.
	\end{lemma}
	
	Clearly, we  obtain the following result.
	
	\begin{lemma}\label{lem-torsion-free-hyperfujiki}
		Let $Z$ be a Fujiki manifold with  $-K_Z$ semi-positive,  and let
		$f:Z\to S$ be a proper surjective holomorphic map onto a reduced and irreducible  complex analytic space $S$. Then $R^qf_*\mathcal O_Z$ is
		torsion-free for each $q\geq 0$.
	\end{lemma}

	\begin{proof}
		Since $Z$ is Fujiki, there exists a proper modification  $\mu:Y\to Z$
		from a compact K\"ahler manifold $Y$. Set
		$h:=f\circ\mu:Y\to S$.
		Since $Y$ is  K\"ahler, the morphism $h$ is  a proper K\"ahler  morphism in
		the sense of \cite[Definition 6.1]{Tk95}. 
		Clearly, $Y$ is bimeromorphic to $Z$ over $S$. 
		Consequently, it follows from the semi-positiveness of $-K_Z$ and Lemma \ref{Take95-torsionfree} that
		$R^qf_*\mathcal O_Z$ is
		torsion-free for each $q\geq 0$.
	\end{proof}

	\begin{defn}
		A compact complex manifold $X$ is called a \emph{hyperk\"ahler manifold (or irreducible holomorphic symplectic manifold)} if $X$ is simply connected, K\"ahler, and
		$H^0(X,\Omega_X^2)=\mathbb C\sigma$
		for some everywhere non-degenerate holomorphic two-form $\sigma$ (in particular, it is even dimensional). 
	\end{defn}

	\begin{defn}\label{def-lagran}
		An $n$-dimensional  complex analytic reduced subspace $Z$ of a $2n$-dimensional hyperk\"ahler manifold $(X, \sigma)$ is called \emph{Lagrangian}  if  $\sigma$  restricts to a zero $(2,0)$-form on the smooth part of $Z$.
	\end{defn}

	\begin{defn}\label{def-Lagran-fibra}
		Let $X$ be a hyperk\"ahler manifold. A \emph{Lagrangian fibration} on $X$ is a holomorphic map $f: X \to B$ with connected fibers onto a normal complex space $B$  such that every irreducible component of the reduction of every fiber of $f$ is a Lagrangian subvariety of $X$. 
	\end{defn}
	
	\section{Construction of a holomorphic line bundle}\label{sect 3}
	
	In this section, motivated by \cite[Lemma 1.2]{Ca20}, \cite[Proposition 4.13]{RT21} and 
	\cite[Theorem 21]{Kol22}, we use the exponential
	sequence to construct a holomorphic line bundle in a certain
	setting.  This construction may be viewed as a 
	relative version of the theorem   on $(1,1)$-classes (e.g.,  the famous Lefschetz theorem for compact K\"ahler manifolds; see \cite[p. 135, Corollary]{MK71} and  \cite[Chapter V, (13.9) Theorem]{Dm12} for more general manifolds).

	We first  give Lemma \ref{germ torsion free}  using elementary coherent sheaf theory. 
	
	\begin{lemma}\label{germ torsion free}
		Let $X$ be a connected and locally irreducible   complex analytic space  and $F$ a coherent sheaf on $X$.  Let $U$ be a dense (in  the usual complex  topology)   subset of $X$. Assume that $s\in \Gamma(X, F)$ satisfies that $s_x\in \rm{Tor}(F)_x$   for any   $x\in U$.
		Then $s_x\in \rm{Tor}(F)_x$ for any $x\in X$, where $ \rm{Tor}(\bullet)$ is the torsion sheaf of $\bullet$.  
	\end{lemma}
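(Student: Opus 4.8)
The plan is to realise the set $T:=\{x\in X:\ s_x\in{\rm Tor}(F)_x\}$ as the support of a coherent ideal sheaf and then exploit the density of $U$. First I would form the annihilator ideal sheaf of $s$: let $\mathcal I:=\ker\!\bigl(\mathcal O_X\xrightarrow{\ \cdot s\ }F\bigr)$, the kernel of the $\mathcal O_X$-linear map determined by $1\mapsto s$. Since $F$ is coherent this is a morphism of coherent $\mathcal O_X$-modules, so $\mathcal I$ is a coherent ideal sheaf on $X$; in particular ${\rm supp}(\mathcal I)=\{x\in X:\ \mathcal I_x\neq 0\}$ is a closed analytic subset of $X$, because the support of a coherent sheaf is closed.

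The key step is to identify $T$ with ${\rm supp}(\mathcal I)$, and this is exactly where local irreducibility is used: for every $x\in X$ the local ring $\mathcal O_{X,x}$ is an integral domain, so a non-zero-divisor of $\mathcal O_{X,x}$ is nothing but a nonzero element. Hence $s_x\in{\rm Tor}(F)_x$ (i.e. $a\cdot s_x=0$ for some non-zero-divisor $a\in\mathcal O_{X,x}$) if and only if ${\rm Ann}_{\mathcal O_{X,x}}(s_x)=\mathcal I_x\neq 0$, i.e. if and only if $x\in{\rm supp}(\mathcal I)$. Thus $T={\rm supp}(\mathcal I)$, which is closed. By hypothesis $U\subseteq T$; since $U$ is dense in $X$ and $T$ is closed, $T=X$, which is precisely the assertion.

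There is really no hard step here — this is the ``elementary coherent sheaf theory'' promised before the statement — and the content lies entirely in the two observations just used: (i) the annihilator sheaf of a section of a coherent sheaf is coherent, hence has closed support; and (ii) local irreducibility makes the local rings domains, which is what converts the a priori non-closed, module-theoretic notion of torsion (defined through non-zero-divisors) into the manifestly closed condition $\mathcal I_x\neq 0$. If one drops local irreducibility, the zero-divisors of $\mathcal O_{X,x}$ can form a larger set and $T$ need no longer be closed, so the hypothesis is used in an essential way. In the special case of a smooth curve, ${\rm supp}(\mathcal I)$ is either discrete or all of $X$, which recovers \cite[Proposition 4.5]{RT21}.
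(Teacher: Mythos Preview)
Your proof is correct and follows essentially the same route as the paper's own argument: both form the annihilator ideal sheaf $\mathcal I=\ker(\mathcal O_X\xrightarrow{\cdot s}F)$, note that it is coherent with closed support, use local irreducibility to identify $\{x:\ s_x\in{\rm Tor}(F)_x\}$ with ${\rm supp}(\mathcal I)$, and conclude by density of $U$. If anything, your write-up is slightly more explicit about where local irreducibility enters (in both directions of the identification), whereas the paper leaves the converse implication implicit.
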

	
	\begin{proof}
		
		We only need to analyze the morphism $g: \mathcal{O}_X\to F$ of sheaves of $\mathcal{O}_X$-modules, induced by $h\mapsto h.s$. Clearly, $\ker g$ is a coherent sheaf and thus its support is  an analytic subset of   $X$. {For any $x\in U$,  since $s_x\in \rm{Tor}(F)_x$,   $(\ker g)_x=\ker g_x\neq \{0\}$ and thus $x\in \rm{supp}(\ker g)$. Then $U\subseteq \rm{supp}(\ker g)$ and thus $X=\rm{supp}(\ker g)$ by the density of $U$ and the closedness of $\rm{supp}(\ker g)$. Consequently, $s_x\in \rm{Tor}(F)_x$ for any $x\in X$.}
	\end{proof}

	We now prove the existence of a holomorphic line bundle with prescribed first
	Chern class.
	
	\begin{thm}\label{rela 1-1 class}
		Let $f: X \to  S$ be a proper surjective holomorphic map 
		with connected fibers from a reduced \footnote{Note that if $f$ is assumed to be flat, $X$ is automatically reduced under a minor additional assumption (\cite[Lemma 1.4]{Fu78/79}).} complex analytic space  $X$ to a
		locally irreducible and connected (hence irreducible)  Stein space $S$,  with $X_t$ denoting the fiber of $f$ over $t\in S$.  Assume that $f$ satisfies the following conditions:
		\begin{enumerate}

			\item\label{condition restric-new}
			There exists a class $u\in H^2(X, \mathbb{Z})$  such that  for any $t\in D$, $u|_{X_t}$ is of type $(1,1)$, i.e., the image of 
			$u|_{X_t}$ in  $H^2(X_t, \mathbb{C})$ can be represented by a $d$-closed $(1,1)$-form (e.g., \cite[p. 135, Corollary]{MK71}), 
			where $D\subseteq S$ is open  and  dense  such that $X_t$ is smooth for any $t\in D$ and that $R^2 f_*\mathcal{O}_X$ is locally  free on $D$.
			\item
			$R^2 f_*\mathcal{O}_X$ is torsion free. \footnote{Note that if the base space $S$ degenerates to a simple point,  the   torsion-freeness condition holds automatically. So  Theorem \ref{rela 1-1 class} can be regarded as a 
				relative version of the theorem   on $(1,1)$-classes.} 
		\end{enumerate}
		Then there exists a holomorphic line bundle $L$ on $X$ such that $c_1(L)=u$.
		
	\end{thm}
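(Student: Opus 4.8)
The plan is to run the proof of the Lefschetz theorem on $(1,1)$-classes relatively over $S$. First I would use the exponential sheaf sequence $0\to\mathbb{Z}_X\to\mathcal{O}_X\to\mathcal{O}_X^{*}\to 0$ on $X$: from its long exact sequence, $u$ is the first Chern class of some holomorphic line bundle on $X$ exactly when the image $s:=j(u)$ of $u$ under the map $j\colon H^2(X,\mathbb{Z})\to H^2(X,\mathcal{O}_X)$ induced by $\mathbb{Z}_X\hookrightarrow\mathcal{O}_X$ vanishes. Since $f$ is proper, the sheaves $R^qf_*\mathcal{O}_X$ are coherent on $S$ by Grauert's direct image theorem; since $S$ is Stein, Cartan's Theorem~B gives $H^p(S,R^qf_*\mathcal{O}_X)=0$ for all $p\ge 1$, so the Leray spectral sequence of $f$ for $\mathcal{O}_X$ degenerates and produces a canonical isomorphism $H^2(X,\mathcal{O}_X)\cong\Gamma(S,\mathcal{G})$, where $\mathcal{G}:=R^2f_*\mathcal{O}_X$. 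Thus it suffices to show that the global section $s\in\Gamma(S,\mathcal{G})$ attached to $u$ is zero.

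Next I would bring in the fibrewise data. For $t\in D$ the fibre $X_t$ is a smooth compact complex manifold, and condition~(1) says $u|_{X_t}$ is represented by a $d$-closed $(1,1)$-form; since the $(0,2)$-component of such a representative vanishes, the image of $u|_{X_t}$ under the natural map $H^2(X_t,\mathbb{C})\to H^2(X_t,\mathcal{O}_{X_t})$ (the $(0,2)$-projection at the level of de Rham/Dolbeault representatives) is $0$. Using the functoriality of the exponential sequence under restriction to a fibre, together with the compatibility of the Leray edge map with the base-change morphism $\mathcal{G}\otimes_{\mathcal{O}_S}\mathbb{C}(t)\to H^2(X_t,\mathcal{O}_{X_t})$, this forces $s$ to restrict to $0$ in $H^2(X_t,\mathcal{O}_{X_t})$ for every $t\in D$. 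Over a dense open $D^{\circ}\subseteq D$ one has, by generic flatness together with Grauert's base-change and semicontinuity theorems, that $f$ is flat, $\mathcal{G}$ is locally free, and the base-change morphism for $\mathcal{G}$ is an isomorphism; hence on $D^{\circ}$ the frame coordinates of $s$ all vanish identically, i.e.\ $s|_{D^{\circ}}=0$. In particular $s_x\in\mathrm{Tor}(\mathcal{G})_x$ for every $x$ in the dense set $D^{\circ}$, so Lemma~\ref{germ torsion free}, applied to the coherent sheaf $\mathcal{G}$ on the locally irreducible connected (hence irreducible) space $S$, yields $s_x\in\mathrm{Tor}(\mathcal{G})_x$ for \emph{all} $x\in S$; that is, $s$ is a torsion section. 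Condition~(2) then gives $s_x=0$ for $x\in S\setminus D$, since $\mathrm{Tor}(\mathcal{G})_x=0$ there.

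At this point $\mathrm{supp}(s)$ is an analytic subset contained in the thin set $D\setminus D^{\circ}$, and I expect the vanishing of $s$ there to be the main obstacle: on the locus in $D$ where $\mathcal{G}$ fails to be locally free — equivalently where $h^{0,2}$ of the fibres jumps — the fibrewise $(1,1)$-conditions no longer translate directly into the vanishing of $s$. One has to argue that $\mathrm{supp}(s)$ is in fact empty, e.g.\ by restricting the family over an irreducible component of $\mathrm{supp}(s)$ (whose generic point still lies in $D$, so the corresponding fibre is smooth and condition~(1) still applies there) and re-running the base-change step, or by an induction on $\dim S$; this is precisely the step where conditions~(1) and~(2) and Lemma~\ref{germ torsion free} are used in an essential, not merely generic, way. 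A secondary point, routine but to be spelled out, is the functoriality bookkeeping used above: one must check that the connecting maps of the exponential sequences on $X$ and on $X_t$ commute with restriction and with the Leray/base-change morphisms, so that the comparison between $s$ and $u|_{X_t}$ is legitimate. Finally, when the fibres over $D$ happen to be Kähler (the case relevant to Section~\ref{s projective}), $\mathcal{G}$ is locally free over all of $D$, so $D^{\circ}=D$ and the delicate last step disappears; Takegoshi's theorem (Lemma~\ref{Take95-torsionfree}) is then a natural way to verify condition~(2).
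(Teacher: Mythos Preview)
Your plan is the paper's plan: exponential sequence, Leray $+$ Cartan~B to identify $H^2(X,\mathcal O_X)\cong\Gamma(S,R^2f_*\mathcal O_X)$, fibrewise Lefschetz $(1,1)$ over $D$, generic flatness and Grauert base change to kill the resulting section $u'$ on a dense open, Lemma~\ref{germ torsion free} to propagate torsion, and condition~(2) to kill $u'$ off $D$. The commutative-diagram bookkeeping you flag is precisely what the paper writes out with the exponential sequences on $X$ and on $\iota_*X_t$.

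Where you and the paper part ways is exactly at your ``main obstacle''. The paper does \emph{not} restrict over components of $\operatorname{supp}(s)$ or induct on $\dim S$; instead it asserts as a Claim that $u'|_D=0$ on the full set $D$ (not merely on your $D^\circ$), arguing that ``based on the reducedness of $D$, it suffices to prove $u'(t)=0$ for any $t\in D$'' and then verifying the latter from the fibrewise $(1,1)$ hypothesis via the diagram. Your caution here is well-placed---on a reduced base, vanishing of all fibre values of a section of a general coherent sheaf does not force the section itself to vanish where the sheaf fails to be locally free---so the paper's justification is terse precisely on the locus $D\setminus D^\circ$ you worry about. In every downstream application (Theorem~\ref{projective criterian}), however, $R^2f_*\mathcal O_X$ is torsion-free on \emph{all} of $S$ (via Lemma~\ref{Take95-torsionfree} or Lemma~\ref{kollarfree}), and then, as you yourself note, the delicate step evaporates: Lemma~\ref{germ torsion free} already forces a globally torsion section of a globally torsion-free sheaf to vanish.
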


	\begin{proof}
		First note that 
		we have a natural morphism $
		H^2\left(X, \mathbb{Z}\right) \rightarrow H^2\left(X, \mathcal{O}_{X}\right)$, derived from the natural morphism  $\mathbb{Z}\to \mathcal{O}_{X}$ of sheaves.  
		Since $f$ is proper, $R^2 f_*\mathcal{O}_{X}$ is coherent by the Grauert's direct image theorem. By the theorem of Cartan B, one can use the Leray spectral sequence to get that $ H^2\left(X, \mathcal{O}_{X}\right)\cong H^0\left(S, R^2 f_*\mathcal{O}_{X}\right)$ (one can also directly use \cite[p. 248, Satz 5]{G60} or \cite[Lemma II.1+Corollary]{P71} to get this), and thus we now have the morphism
		\begin{equation}\label{isomorphism by Leray-new}
			H^2\left(X, \mathbb{Z}\right) \rightarrow H^2\left(X, \mathcal{O}_{X}\right)\cong H^0\left(S, R^2 f_*\mathcal{O}_{X}\right).
		\end{equation}
		Let now  $u^{\prime} \in H^0\left(S, R^2 f_*\mathcal{O}_{X}\right)$ be the image of $u$ under the map \eqref{isomorphism by Leray-new}.    
		
		\begin{claim}\label{claim-u'-new}
			$u^{\prime}|_D=0$ as an element in $\Gamma(D, R^2 f_*\mathcal{O}_{X})$.  
		\end{claim}
		
		\begin{proof}
			Since $f$ is proper,   $S$ is reduced and $X$  has a countable topology,  $f$ is generally flat by the analytic version of the generic flatness theorem (e.g., \cite[Theorem 2.8]{PR94}), i.e., $f$ is flat over a nonempty  analytic Zariski open subset of $S$.  Furthermore, the coherent sheaves $R^p f_* \mathcal{O}_{X}$ are locally free generally for any $p\geq 2$  (e.g., \cite[Proposition 7.17]{Re94}) by the reducedness of $S$.      Then $h^p(X_t, \mathcal{O}_{X_t})$ is locally constant for any $p\geq 2$ on certain Zariski open subset $\tilde{S}$ of $S$, based on \cite[Corollary 3.10+ Theorem 4.12]{BS76}.

			For any $t\in D$, we denote by $u^{\prime}(t)$  the value of $u^{\prime}$ at $t$, i.e., the image of the germ  $(u^{\prime})_t$ under the map $$(R^2 f_* \mathcal{O}_{X})_t\to (R^2 f_* \mathcal{O}_{X})_t / \mathfrak{m}_t (R^2 f_* \mathcal{O}_{X})_t,$$
			where $\mathfrak{m}_t$ is the maximal ideal of $\mathcal{O}_{S,t}$. Note that  the base change map
			$$(R^2 f_* \mathcal{O}_{X})_t / \mathfrak{m}_t (R^2 f_* \mathcal{O}_{X})_t \to H^2(X_t, \mathcal{O}_{X_t})$$
			is isomorphic for any  $t\in \tilde{S}$,  by  Grauert's base change theorem (e.g.,  \cite[p. 33-(8.5) Theorem]{BHPV04}).

			Based on the reducedness of $D$ and the local freeness of $R^2 f_* \mathcal{O}_X$ on $D$, for proving the present claim, it suffices to prove $u^{\prime}(t)=0$ for any   $t\in D\cap \tilde{S}$.  
			Indeed, since
			$R^2 f_*\mathcal{O}_X$ is locally free on the reduced space $D$, the  zero locus of $u^{\prime}|_D$
			would be an analytic subset of $D$.  Note that $D\cap\widetilde{S}$ is dense
			in $D$. Consequently, if we have proved $u^{\prime}(t)=0$ for each
			$t\in D\cap\widetilde{S}$, then the zero locus of $u^{\prime}|_D$ must be the whole of  $D$. That is to say,  $u^{\prime}(t)=0$ for any $t\in D$. Again by the reducedness of $D$ and the local freeness of $R^2 f_*\mathcal{O}_X|_D$,  $u^{\prime}=0$  in $\Gamma(D, R^2f_*\mathcal{O}_X)$, as claimed.

			Now we prove that $u^{\prime}(t)=0$ for any   $t\in D\cap \tilde{S}$.
			Consider the long exact sequence  \footnote{One can see \cite[Proposition 4.13-(4.6)]{RT21} for more details in the setting of smooth family.} associated to the following diagram (based on cohomological properties on the sheaf of extension by zero, e.g., \cite[Chapter III, Lemma 2.10]{Ha77}) which is induced by the  exponential  sequences (e.g., \cite[p. 246, 54.3 Lemma]{KK83} for a complex analytic space) on  $X$ and  $X_t$ for any $t\in D\cap \tilde{S}$,
			\begin{center}
				\begin{tikzpicture}
					% »æÖÆ½»»»½Úµã
					\node (A) at (0,1) {$0$};
					\node (B) at (1.5,1) {$\mathbb{Z}$};
					\node (C) at (3,1) {$\mathcal{O}_{X}$};
					\node (D) at (4.5,1) {$\mathcal{O}^*_{X}$};
					\node (E) at (6,1) {$1$};
					\node (a) at (0,0) {$0$};
					\node (b) at (1.5,0) {$\iota_*\mathbb{Z}$};
					\node (c) at (3,0) {$\iota_*\mathcal{O}_{X_t}$};
					\node (d) at (4.5,0) {$\iota_*\mathcal{O}^*_{X_t}$};
					\node (e) at  (6,0) {$1$};
					
					% »æÖÆ¼ýÍ·
					\draw[->] (A) -- (B)  node[midway, above]{};
					\draw[->] (B) -- (C)  node[midway, above]{};
					\draw[->] (C) -- (D)  node[midway, above]{};
					\draw[->] (D) -- (E)  node[midway, above]{};
					\draw[->] (a) -- (b)  node[midway, below]{};
					\draw[->] (b) -- (c)  node[midway, below]{};
					\draw[->] (c) -- (d)  node[midway, below]{};
					\draw[->] (d) -- (e)  node[midway, below]{};
					\draw[->] (B) -- (b)  node[midway, left]{};	
					\draw[->] (C) -- (c)  node[midway, left]{};	
					\draw[->] (D) -- (d)  node[midway, left]{};		
				\end{tikzpicture}
			\end{center}
			where $\iota: X_t \hookrightarrow X$ is the natural inclusion for any $t\in D\cap \tilde{S}$.  We then obtain that $u^{\prime}(t)=0$  for any   $t\in D\cap \tilde{S}$, based on the  theorem \cite[p. 135, Corollary]{MK71} on $(1,1)$-classes for compact  manifolds (for related result on noncompact manifolds, one can see  \cite[Chapter V, (13.9) Theorem]{Dm12}) and  the given condition that  $u|_{X_t}$ is of type $(1,1)$ for any $t\in D\cap \tilde{S}$.  
		\end{proof}
		
		By Claim \ref{claim-u'-new} and the  torsion-freeness of $R^2 f_* \mathcal{O}_{X}$, $u^{\prime}$ vanishes everywhere on $S$, based on Lemma \ref{germ torsion free}.  Consequently, by again the long exact sequence associated to the  exponential exact sequence over $X$, we obtain that $u$ is the first Chern class of a holomorphic line bundle ${L}$ on $X$.  This completes the proof of Theorem \ref{rela 1-1 class}.
	\end{proof}

	\section{Relatively big line bundles and local Moishezonness}\label{sec-generic-relative-big-line-bundles}
	
	In this section,  we utilize the theory of non-K\"ahler loci by S. Boucksom to construct an integral $2$-cohomology class whose restriction to a general fiber is big, and then construct a relatively big line bundle (i.e., whose restriction to a general fiber is big) utilizing the construction of holomorphic line bundles in Section \ref{sect 3}. This leads   to a local Moishezonness criterion for fibrations whose total spaces are in Fujiki class $\mathscr C$,  generalizing the bimeromorphic version of Campana’s local projectivity theorem.

	We first show that for a proper morphism from a Fujiki manifold, any big class behaves well on general smooth fibers.

	\begin{lemma}\label{lem-big-class-restricts-to-general-fiber-zariski}
		Let $Z$ be a Fujiki manifold and let $S$ be a reduced and irreducible complex analytic
		space. Let $\beta\in H^{1,1}(Z,\mathbb{R})$ be a big class, and let
		$f:Z\to S$ be a proper surjective holomorphic map with connected fibers. Denote the fiber of $f$ over $t\in S$ by $Z_t:=f^{-1}(t)$.
		Then there exists an analytic Zariski open subset $D_{\beta}\subseteq S$ such that
		$f^{-1}(D_{\beta})\to D_{\beta}$ is smooth and
		$Z_t\not\subseteq E_{\rm{nK}}(\beta)$ for each $t\in D_{\beta}$.
		As a result, $\beta|_{Z_t}$ is a big class on $Z_t$ for each $t\in D_{\beta}$.
	\end{lemma}
	
	\begin{proof}
		We first prove the following  claim.
		
		\begin{claim}
			Let $A\subsetneq Z$ be a proper analytic
			subset. Then there exists an analytic Zariski open subset $U_A\subseteq S$ such 
			that $ Z_t\not\subseteq A$
			for each $t\in U_A$. 
		\end{claim}
		
		\begin{proof}[Proof of the claim]
			If $A=\emptyset$, we may take $U_A=S$. Thus we may assume $A\neq\emptyset$.
			Since $Z$ is compact, $A$ has only finitely many irreducible components. Write $ A=A_1\cup\cdots\cup A_N$
			as the decomposition into irreducible components, where each $A_i$ is endowed with its
			reduced structure. Set  $r:=\dim Z-\dim S$.
			For each $1\leq i\leq N$, denote the restriction of $f$ to $A_i$ by
			\[
			f_i:=f|_{A_i}:A_i\to S .
			\]

			Define
			\[
			B_i:=\{s\in S\mid \dim f_i^{-1}(s)\geq r\}.
			\]
			Note that  $\dim A_i\leq \dim Z-1$ for any $i$. Then, regardless of whether $f_i$ is
			surjective or not,  it follows from  the fiber dimension theorem (e.g., \cite[Theorem 1.19]{PR94})
			and Remmert's proper mapping theorem that 
			$B_i$ is a proper analytic subset of
			$S$ for each $i$.

			Define
			\[
			\Sigma_A:=B_1\cup\cdots\cup B_N
			\quad\text{and}\quad
			U_A:=S\setminus \Sigma_A.
			\]
			Then  $U_A$ is an analytic Zariski open  subset of $S$.

			It remains to check that $Z_t\not\subseteq A$ for each $t\in U_A$. 
			Suppose that $Z_s\subseteq A$ for some $s\in U_A$.   Note that   it follows from  the fiber dimension formula (e.g., \cite[(1.17)]{PR94}) that for any $t\in S$,  $\dim f^{-1}(t)\geq r$.  Then there exists   an irreducible component $F$ of $Z_s$ with $\dim F\geq r$.
			Since
			\[
			F\subseteq Z_s\subseteq A=A_1\cup\cdots\cup A_N
			\]
			and $F$ is irreducible, there exists some $i$ such that $F\subseteq A_i$. Thus we have
			\[
			\dim f_i^{-1}(s)=\dim(A_i\cap Z_s)\geq \dim F\geq r.
			\]
			Therefore $s\in B_i\subseteq \Sigma_A$, contradicting  the assumption that  $s\in U_A$. This proves the
			claim.
		\end{proof}

		Now take $A$ in the above claim to be the non-K\"ahler locus
		$E_{\rm{nK}}(\beta).$
		Since $\beta$ is big,  $E_{\rm{nK}}(\beta)$ is a proper analytic
		subset of $Z$ by \cite[Theorem 3.17-(ii)]{B04}. Hence the above claim gives an
		analytic Zariski open subset $U_{\beta}\subseteq S$ such that
		$ Z_t\not\subseteq E_{\rm{nK}}(\beta)$
		for each $t\in U_{\beta}$.
		
		On the other hand, by the generic smoothness theorem  (e.g., \cite[Theorems 1.21, 1.22, 2.8]{PR94}), after intersecting with the smooth locus of the base, 
		there exists an analytic Zariski open subset $D_{\rm{sm}}\subseteq S$ such that
		$ f^{-1}(D_{\rm{sm}})\to D_{\rm{sm}}$
		is smooth (Definition \ref{defn-smoothfamily}). Define $D_{\beta}:=U_{\beta}\cap D_{\rm{sm}}$.  Then $D_{\beta}$ is the desired one. 
		
		It remains to prove that $\beta|_{Z_t}$ is big for each $t\in D_{\beta}$. Fix
		$t\in D_{\beta}$. By \cite[Theorem 3.17-(ii)]{B04}, there exists a K\"ahler current
		$T\in\beta$ with analytic singularities whose singular locus is precisely
		$E_{\rm{nK}}(\beta)$. Choose a Hermitian form $\omega_Z$ on $Z$ and a real number
		$\varepsilon>0$ such that
		$ T\geq \varepsilon\omega_Z$.
		Since $f^{-1}(D_{\beta})\to D_{\beta}$ is smooth, the fiber $Z_t$ is a smooth (and  connected). Because $Z_t\not\subseteq E_{\rm{nK}}(\beta)$, the local plurisubharmonic
		potentials of $T$ are not identically $-\infty$ on $Z_t$. Thus the restriction
		$T|_{Z_t}$ is well defined. Furthermore,  $T|_{Z_t}$ is a  
		K\"ahler current ($T|_{Z_t}\geq \varepsilon \omega_Z|_{Z_t}$) representing
		$\beta|_{Z_t}$.
		Therefore $\beta|_{Z_t}$ is a big class on $Z_t$.
	\end{proof}

	The following result should be well known, although we were unable to find a suitable reference. We therefore include a proof in the appendix for the reader’s convenience.

	\begin{lemma}[= Lemma \ref{lem-big-chern-big-line-appen}]
		\label{lem-big-chern-big-line}
		Let $M$ be a Fujiki manifold and let $L$ be a holomorphic line bundle
		on $M$. If $(c_1(L))_{\mathbb R}$  is a big class, then $L$ is a big line bundle. 
	\end{lemma}

	\begin{lemma}\label{lem-zero-one-fiber-zero-all-fibers}
		Let $Z$ be a Fujiki manifold, and let
		$f:Z\to S$ be a proper surjective holomorphic map with connected fibers $Z_t:=f^{-1}(t)$, where $S$ is a
		reduced and irreducible complex analytic space.     Then there exists a connected  analytic Zariski open subset $S_0\subseteq S$  such that
		\[
		f_0:=f|_{f^{-1}(S_0)}:f^{-1}(S_0)\to S_0
		\]
		is smooth (Definition \ref{defn-smoothfamily}). For such an $S_0$, the following conditions are equivalent:
		\begin{enumerate}[\rm{(}1\rm{)}]
			\item\label{one-each-1} for each $t\in S_0$, the restriction map
			\[
			H^0(Z,\Omega_Z^2)\longrightarrow H^0(Z_t,\Omega_{Z_t}^2)
			\]
			is zero;
			\item\label{one-each-2} there exists some $t_0\in S_0$ such that the restriction map
			\[
			H^0(Z,\Omega_Z^2)\longrightarrow H^0(Z_{t_0},\Omega_{Z_{t_0}}^2)
			\]
			is zero.
		\end{enumerate}
	\end{lemma}
	
	\begin{proof}
		By the generic smoothness theorem (e.g., \cite[Theorems 1.21, 1.22, 2.8]{PR94}; here the reducedness of $S$ is required), there exists  a maximal analytic Zariski open subset $D\subseteq S$ such that
		\[
		f|_{f^{-1}(D)}:f^{-1}(D)\to D
		\]
		is flat and submersion (which is called smooth in \cite[p. 114]{PR94}).
		Set $S_0:=D\cap S_{\rm{reg}}$, where $S_{\rm{reg}}$ is the smooth locus of $S$.  Then  $f|_{f^{-1}(S_0)}$ is smooth in the sense of Definition \ref{defn-smoothfamily}.   Since $S$ is irreducible and $S\setminus S_0$ is a proper analytic subset of
		$S$, the subset $S_0$ is irreducible, hence connected. In conclusion,  $S_0$ is a connected analytic Zariski open subset of $S$.  Set
		\[
		Z_0:=f^{-1}(S_0)
		\quad\text{and}\quad
		f_0:=f|_{Z_0}:Z_0\to S_0.
		\]
		By the definition of $S_0$,  $f_0$ is a proper holomorphic submersion between complex manifolds.

		Clearly, \eqref{one-each-1} implies \eqref{one-each-2}. Conversely, assume that
		\eqref{one-each-2} holds. It suffices to prove that, for any fixed
		$\eta\in H^0(Z,\Omega_Z^2)$, one has $\eta|_{Z_t}=0$ for each $t\in S_0$.

		Since $Z$ is Fujiki, it satisfies the $\partial\bar{\partial}$-lemma. By the
		same argument as in Footnote \ref{hyperfujiki d-closed}, the form $\eta$ is
		$d$-closed on $Z$, and hence its restriction (still denoted by $\eta$) to $Z_0$ is also $d$-closed. Thus $\eta$
		defines a class
		$[\eta]\in H^2(Z_0,\mathbb C).$
		For any $t\in S_0$, the class $[\eta]|_{Z_t}\in H^2(Z_t,\mathbb C)$ is the restriction
		of the fixed cohomology class $[\eta]$. Since $f_0$ is a proper holomorphic submersion,
		it follows from Ehresmann's theorem that $ R^2(f_0)_*\mathbb C$
		is a local system on $S_0$. Moreover, the fiberwise restrictions of $[\eta]$ define a
		section  $s_{\eta}\in H^0(S_0,R^2(f_0)_*\mathbb C)$ such that $s_{\eta}(t):=\alpha_t((s_{\eta})_t)=[\eta]|_{Z_t}$, 
		where $\alpha_t$ is the canonical isomorphism  $(R^2(f_0)_*\mathbb C)_t\cong H^2(Z_t, \mathbb{C})$.
		Indeed, locally on $S_0$, Ehresmann's theorem identifies the cohomology groups
		$H^2(Z_t,\mathbb C)$ of the fibers, and under this identification the classes
		$[\eta]|_{Z_t}$ are induced by the same class on the local total space. Thus
		$s_{\eta}$ is locally constant.  
		
		By the given assumption, $\eta|_{Z_{t_0}}=0$.  Thus we obtain
		\[
		s_{\eta}(t_0)=[\eta]|_{Z_{t_0}}=0.
		\]
		Since $S_0$ is connected and $s_{\eta}$ is locally constant, it follows that
		$s_{\eta}=0$. Therefore $ [\eta]|_{Z_t}=0$
		for each $t\in S_0$.
		
		Since for each $t\in S_0$,  $Z_t$  is also a  Fujiki manifold  (e.g., \cite[Lemma 4.6]{Fu78/79}),   $Z_t$ satisfies the
		$\partial\bar{\partial}$-lemma. Consequently,  the natural map
		$H^0(Z_t,\Omega_{Z_t}^2)\to H^2(Z_t,\mathbb C)$
		is injective. It then follows from $[\eta]|_{Z_t}=0$ that
		$ \eta|_{Z_t}=0$
		for each $t\in S_0$. This proves \eqref{one-each-1}.
	\end{proof}

	We now prove the following theorem on the existence of a relatively big line bundle (i.e., restriction to general fiber is big)  near any fiber.  The key point is to construct an integral $2$-class whose restriction to a general fiber is a big class, so that Theorem \ref{rela 1-1 class} can be applied.
	
	\begin{thm}\label{thm-generic-relative-big-line-bundle}
		Let $Z$ be a Fujiki manifold, and let $S$ be a locally irreducible 
		and connected complex analytic space. Let
		$ f:Z\to S$
		be a proper surjective holomorphic map with connected fibers, and denote the fiber over
		$t\in S$ by $Z_t:=f^{-1}(t)$.  
		Assume that the following conditions hold:
		\begin{enumerate}[\rm{(}1\rm{)}]
			\item\label{1-zero} there exists a point $s\in S_0$ such that the restriction map
			\[
			H^0(Z,\Omega_Z^2)\to H^0(Z_s,\Omega_{Z_s}^2)
			\]
			is zero,   where  $S_0:=D\cap S_{\rm{reg}}$ (as in the proof of Lemma \ref{lem-zero-one-fiber-zero-all-fibers})  with  $S_{\rm{reg}}$  the smooth locus of $S$ and $D$  the maximal  analytic Zariski open subset of  $S$  such that $f$ is flat and submersive (by the generic smoothness theorem) over $D$;
			\item\label{t.f.-2} $R^2f_*\mathcal O_Z$ is torsion-free.
		\end{enumerate}
		Then, for each $t\in S$, there exists a connected (and thus irreducible by the local irreducibility) Stein open neighbourhood
		$U\subseteq S$ of $t$ and a holomorphic line bundle $L_U$ on
		$Z_U:=f^{-1}(U)$
		such that $L_U|_{Z_\tau}$ is big for each $\tau\in D_U$,   where $D_U\subseteq U$ is a nonempty  analytic Zariski open subset of $U$.
	\end{thm}

	\begin{proof}
		Since $S$ is  locally irreducible and connected, it is irreducible.  Since $S$ is locally irreducible, it is reduced and thus $S_{\rm{reg}}$ is nonempty Zariski open in $S$. Consequently, it follows from  the generic smoothness theorem (e.g., \cite[Theorems 1.21, 1.22, 2.8]{PR94}) that   $S_0$ is nonempty Zariski open in $S$.  By condition
		\eqref{1-zero} and Lemma \ref{lem-zero-one-fiber-zero-all-fibers}, the restriction map
		\[
		H^0(Z,\Omega_Z^2)\longrightarrow H^0(Z_\tau,\Omega_{Z_\tau}^2)
		\]
		is zero for each $\tau\in S_0$.  Motivated by an insightful observation of \cite[p. 589, paragraph -1]{Ca20} on hyperk\"ahler manifolds and Lagrangian fibrations, we now construct certain integral $2$-class on $Z$.

		\begin{step}\label{step-cons of 2 cla}
			Construction of an integral $2$-class  whose restriction to a general fiber is big
		\end{step}
		
		Since $Z$ is Fujiki, it satisfies the $\partial\bar{\partial}$-lemma. Thus we have the
		decomposition  
		\[
		H^2(Z,\mathbb C)
		=
		H^{2,0}_{\rm dR}(Z)\oplus H^{1,1}_{\rm dR}(Z)\oplus H^{0,2}_{\rm dR}(Z)
		\]
		and the induced continuous projection 
		\[
		p_{1,1}:H^2(Z,\mathbb R)\to H^{1,1}_{\rm dR}(Z,\mathbb R),
		\]
		where the notations are as defined in Section \ref{section preli}.
		
		Since $Z$ is Fujiki, it admits
		a big class. Take a big class $\beta_0\in H^{1,1}_{\rm dR}(Z,\mathbb R)$. Since the big cone is
		open, there exists an open neighbourhood $V$ of $\beta_0$ in
		$H^{1,1}_{\rm dR}(Z,\mathbb R)$ such that each class in $V$ is big. Then $p_{1,1}^{-1}(V)$ is an
		open subset of $H^2(Z,\mathbb R)$. Since $H^2(Z,\mathbb Q)_{\mathbb R}$ is dense in
		$H^2(Z,\mathbb R)$, we may choose a  class $a\in H^2(Z,\mathbb Q)$ such that
		$p_{1,1}(a_{\mathbb R})\in V$. After multiplying $a$ by a positive integer, we obtain a class
		$$u\in H^2(Z,\mathbb Z)$$ such that
		$ \beta:=p_{1,1}(u_{\mathbb R})$ is big, because the big cone is a positive cone. Moreover,
		\[
		u_{\mathbb R}-\beta
		\in
		\bigl(H^{2,0}_{\rm dR}(Z)\oplus H^{0,2}_{\rm dR}(Z)\bigr)\cap H^2(Z,\mathbb R),
		\]
		where the intersection is taken inside $H^2(Z,\mathbb C)$.

		By Lemma \ref{lem-big-class-restricts-to-general-fiber-zariski}, there exists a dense
		analytic Zariski open subset $D_{\beta}'\subseteq S$ such that
		$ f^{-1}(D_{\beta}')\to D_{\beta}'$
		is smooth and $\beta|_{Z_\tau}$ is a big class for each $\tau\in D_{\beta}'$. Set $ D_{\beta}:=D_{\beta}'\cap S_0.$
		Then $D_{\beta}$ is an analytic
		Zariski open subset of $S$ such that 
		$f^{-1}(D_{\beta})\to D_{\beta}$
		is smooth, and  $\beta|_{Z_\tau}$ is big for each $\tau\in D_{\beta}$. Furthermore, 
		$\eta|_{Z_\tau}=0$
		for each $\eta\in H^0(Z,\Omega_Z^2)$ and each $\tau\in D_{\beta}$.

		Since $Z$ satisfies the $\partial\overline{\partial}$-lemma,  by the
		same argument as in Footnote \ref{hyperfujiki d-closed}, any holomorphic $p$-form on $Z$ is $d$-closed.  Hence, for any $\eta\in H^0(Z,\Omega_Z^2)$,  we have a well-defined notation $[\eta]\in   H^{2,0}_{\rm dR}(Z)$. Furthermore, $H^0(Z,\Omega_Z^2)$ can be viewed as a   subspace of $H^2(Z,\mathbb C)$ via the canonical isomorphism $H^0(Z,\Omega_Z^2)\cong H^{2,0}_{\rm dR}(Z)$ given by the $\partial\overline{\partial}$-lemma.
		Moreover, the complex conjugation exchanges $H^{2,0}_{\rm dR}(Z)$ and $H^{0,2}_{\rm dR}(Z)$, and  $H^2(Z,\mathbb R)$ is precisely the conjugation-invariant part of $H^2(Z,\mathbb C)$. Hence, any element $\gamma$ of
		$$
		\left(H^{2,0}_{\rm dR}(Z) \oplus H^{0,2}_{\rm dR}(Z)\right) \cap H^2(Z, \mathbb{R})
		$$
		can  split  uniquely as $\gamma=\alpha+\overline{\alpha}$ with $\alpha\in H^{2,0}_{\rm dR}(Z)$.

		Let $\eta_1,\ldots,\eta_m$ be a  basis of  $H^0(Z,\Omega_Z^2)$.
		Then  the complex deRham cohomology classes $[\eta_1],\ldots,[\eta_m]$ form a basis of $H^{2,0}_{\rm dR}(Z)$.  It then follows that this real vector space
		\[
		\bigl(H^{2,0}_{\rm dR}(Z)\oplus H^{0,2}_{\rm dR}(Z)\bigr)\cap H^2(Z,\mathbb R)
		\]
		is spanned by $[\operatorname{Re}\eta_j]$ and $[\operatorname{Im}\eta_j]$, $1\leq j\leq m$, where 
		the real parts $\operatorname{Re}\eta_j=\frac{\eta_j+\overline{\eta_j}}{2}$ and imaginary parts $\operatorname{Im}\eta_j=\frac{\eta_j-\overline{\eta_j}}{2\sqrt{-1}}$
		are clearly real $d$-closed $2$-forms and thus $[\operatorname{Re}\eta_j]$ and
		$[\operatorname{Im}\eta_j]$ are well-defined real deRham cohomology classes in
		$H^2(Z,\mathbb R)$.

		Recall that $\eta|_{Z_\tau}=0$
		for each $\eta\in H^0(Z,\Omega_Z^2)$ and each $\tau\in D_{\beta}$.
		Then 
		$ (u_{\mathbb R}-\beta)|_{Z_\tau}=0$
		for each $\tau\in D_{\beta}$. Thus
		$u_{\mathbb R}|_{Z_\tau}=\beta|_{Z_\tau}$
		for each $\tau\in D_{\beta}$.

		\begin{step}\label{step-cons of line}
			Construction of the desired  relatively big line bundle
		\end{step}
		
		Recall that every complex space  is locally connected (\cite[p. 178]{GR84}), and thus any  connected component of a complex space is open.
		Then for any fixed $t\in S$, we may choose a connected Stein open neighbourhood $U\subseteq S$ of
		$t$. Set
		\[
		Z_U:=f^{-1}(U)
		\quad\text{and}\quad
		f_U:=f|_{Z_U}:Z_U\to U .
		\]
		Since $U$ is locally irreducible and connected, $U$ is irreducible. By condition
		\eqref{t.f.-2},  the sheaf
		$R^2(f_U)_*\mathcal O_{Z_U}$
		is torsion-free. Let $D_{\rm{lf}}\subseteq U$ be its locally free locus.  Set
		\[
		D_U:=U\cap D_{\beta}\cap D_{\rm{lf}}.
		\]
		Then $D_U$ is a nonempty analytic Zariski open subset of $U$. For each $\tau\in D_U$, the
		fiber $Z_\tau$ is smooth, and $u_{\mathbb R}|_{Z_\tau}=\beta|_{Z_\tau}$; in particular, $u_{\mathbb R}|_{Z\tau}$ is of type $(1,1)$.
		
		Now we apply Theorem \ref{rela 1-1 class}  for
		$f_U:Z_U\to U$. Then there exists a holomorphic line bundle
		$L_U$ on $Z_U$ such that $  c_1(L_U)=u|_{Z_U}.$
		Finally, for each $\tau\in D_U$, we have
		$c_1(L_U|_{Z_\tau})=u|_{Z_\tau}$
		and thus
		\[
		c_1(L_U|_{Z_\tau})_{\mathbb R}
		=
		u_{\mathbb R}|_{Z_\tau}
		=
		\beta|_{Z_\tau}.
		\]
		Recalling the construction of  
		$D_U$ and $D_{\beta}$, we obtain that the class $\beta|_{Z_\tau}$ is big. Therefore, by Lemma
		\ref{lem-big-chern-big-line}, the line bundle $L_U|_{Z_\tau}$ is big. This completes  the proof.
	\end{proof}
	
	For a family with relatively big line bundles, one can construct a bimeromorphic embedding via the Kodaira map (\cite{RT21}, \cite{Kol22}),  to obtain the local  Moishezonness.
	Here we adopt    Koll{\'a}r's methods.

	\begin{lemma}[{\cite[Definition 18, Lemma 19]{Kol22}}]
		\label{lem-kollar-very-big-locus}
		Let $g:X\to S$ be a proper morphism of normal irreducible complex analytic
		spaces, and let $M$ be a holomorphic line bundle on $X$. Set the very big (\cite[Definition 10-(4)]{Kol22}) locus to be
		\[
		\operatorname{VB}_S(M)
		:=
		\{s\in S\mid M|_{X_s}\text{ is very big  on }X_s\}.
		\]
		Then 
		$\operatorname{VB}_S(M)$ is either nowhere dense in the analytic Zariski topology,
		or contains a dense open subset of $S$. In the latter case, $g:X\to S$ is a
		Moishezon morphism.
	\end{lemma}

	\begin{prop}
		\label{prop-general-fiber-big-line-bundle-moishezon}
		Let $g:X\to S$ be a proper surjective morphism of normal irreducible complex analytic
		spaces. Let $L$ be a holomorphic line bundle on $X$. Assume that there exists a
		nonempty open (in the usual complex topology) subset $D\subset S$  such that
		$X_s$ is normal and  $L|_{X_s}$ is big for each $s\in D$. Then $g:X\to S$ is a Moishezon morphism.
	\end{prop}
	
	\begin{proof}
		For each positive integer $m$, set
		$V_m:=\operatorname{VB}_{S}(L^m)$.
		Since $X_s$ is normal and  $L|_{X_s}$ is big for each $s\in D$, for each $s\in D$ there exists a
		positive integer $m=m(s)$ such that $L^m|_{X_s}$ is very big (e.g., (\cite[Definition 10-(4)]{Kol22})).   Thus
		\[
		D\subset \bigcup_{m\geq 1}V_m.
		\]

		We claim that there exists some positive integer $m$ such that $V_m$ is not nowhere dense in the analytic Zariski topology. Suppose, to the contrary, that $V_m$ is nowhere
		dense in the analytic Zariski topology for each positive integer $m$.  
		Note that if a subset $A\subset S$ is nowhere dense in the analytic
		Zariski topology, then $A$ is nowhere dense in the usual complex topology. 
		Then each $V_m$ is nowhere dense in the complex
		topology. 
		Note that every locally compact Hausdorff space is a Baire space.  A complex space with
		its complex topology is locally compact and Hausdorff. 
		Consequently, $S$ is
		a Baire space, and thus every countable union of closed sets with empty interior has empty interior. Consequently,  $\bigcup_{m\geq 1}\overline{V_m}$
		(the closure is w.r.t. the usual complex topology) has empty interior, which contradicts the fact that 
		$\bigcup_{m\geq 1}\overline{V_m}$ contains a nonempty open subset $D$.
		The claim follows.
		
		Now choose a positive integer $m_0$ such that $V_{m_0}$ is not nowhere dense in the
		analytic Zariski topology. Applying Lemma
		\ref{lem-kollar-very-big-locus} to the line bundle $L^{m_0}$, we obtain that
		$g:X\to S$ is a Moishezon
		morphism.
	\end{proof}

	We now establish the following criterion for a family to be locally Moishezon. In particular, it also yields a criterion for deciding whether a degeneration of Moishezon varieties remains Moishezon (as shown in Remark \ref{rem-Moishe degene}), a question which seems to be of independent interest.
	
	\begin{thm}
		\label{thm-local-moishezon-criterion}
		Let $Z$ be a Fujiki manifold, and let $S$ be a normal connected complex
		analytic space. Let $f:Z\to S$ be a proper surjective holomorphic map with
		connected fibers, and denote the fiber over $t\in S$ by $Z_t:=f^{-1}(t)$.
		Assume that the following conditions hold:
		\begin{enumerate}[\rm{(}1\rm{)}]
			\item\label{cond-zero-one-fiber}
			there exists a point $s\in S_0$ such that the restriction map
			\[
			H^0(Z,\Omega_Z^2)\to H^0(Z_s,\Omega_{Z_s}^2)
			\]
			is zero, where $S_0:=D\cap S_{\rm{reg}}$
			(as in the proof of Lemma \ref{lem-zero-one-fiber-zero-all-fibers})  with  $S_{\rm{reg}}$  the smooth locus of $S$ and $D$  the maximal  analytic Zariski open subset of  $S$  such that $f$ is flat and submersive over $D$ (note that $S_0$ is analytically Zariski open in $S$);
			\item\label{cond-torsion-free-r2}
			$R^2f_*\mathcal O_Z$ is torsion-free.
		\end{enumerate}
		Then  $f$ is locally Moishezon (Definition \ref{def-loc-moish}).  
	\end{thm}
	
	\begin{proof}
		Since $S$ is  normal and connected, it is irreducible and reduced.  Consequently, as in Theorem \ref{thm-generic-relative-big-line-bundle},    $S_0$ is nonempty Zariski open in $S$.
		
		Fix any $t\in S$. By Theorem \ref{thm-generic-relative-big-line-bundle}, there exist a connected Stein open neighbourhood $U\subseteq S$ of $t$ and a holomorphic line bundle $L_U$ on $Z_U:=f^{-1}(U)$ such that $L_U|_{Z_\tau}$ is big for each $\tau\in D_U$, where $D_U\subseteq U$ is a non-empty analytic Zariski open subset of $U$.
		
		Set $f_U:=f|_{Z_U}:Z_U\to U$. Since $f_U$ is proper with connected fibers and $U$ is connected, $Z_U$ is connected (e.g., \cite[Part 1, Lemma 5.7.5]{Stacks}). As $Z_U$ is smooth and connected, it is  irreducible. Proposition \ref{prop-general-fiber-big-line-bundle-moishezon}, applied to $g=f_U$ and $L=L_U$, then shows that $f_U:Z_U\to U$ is a Moishezon morphism. Since $t\in S$ was arbitrary, $f$ is locally Moishezon.
	\end{proof}

	As a direct consequence, we obtain the following result in a simple case.
	
	\begin{cor}
		\label{cor-local-moishezon-criterion-h20-zero}
		Let $Z$ be a Fujiki manifold, and let $S$ be a normal connected complex
		analytic space. Let $f:Z\to S$ be a proper surjective holomorphic map with
		connected fibers. Assume that $-K_Z$ is semi-positive. Assume moreover that
		there exists a point $s\in S_0$ such that
		$h^{2,0}(Z_s)=0,$
		where $S_0:=D\cap S_{\rm{reg}}$, with $S_{\rm{reg}}$ the smooth locus of $S$
		and $D$ the maximal analytic Zariski open subset of $S$ over which $f$ is flat and submersive.
		Then $f$ is locally Moishezon.
	\end{cor}
	
	\begin{proof}
		Since $h^{2,0}(Z_s)=0$, 
		the restriction map
		\[
		H^0(Z,\Omega_Z^2)\to H^0(Z_s,\Omega^2_{Z_s})
		\]
		is zero.
		Since $Z$ is Fujiki and $-K_Z$ is
		semi-positive, Lemma \ref{lem-torsion-free-hyperfujiki} gives that
		$R^qf_*\mathcal O_Z$ is torsion-free for each $q\geq 0$.  Therefore Theorem
		\ref{thm-local-moishezon-criterion} applies and shows that $f$ is locally
		Moishezon.
	\end{proof}

	\begin{rem}\label{rem-Moishe degene}
		Note that, in Corollary \ref{cor-local-moishezon-criterion-h20-zero}, since submanifolds of Fujiki manifolds are again Fujiki (\cite[Lemma 4.6]{Fu78/79}), each smooth fiber is a Fujiki manifold. Suppose that $h^{2,0}(Z_s)=0$ for some point $s\in S_0$. Since Hodge numbers are invariant in smooth families with Fujiki fibers, we have $h^{2,0}(Z_t)=0$ for each $t\in S_0$.
		It then follows from the definition of a Fujiki manifold, Kodaira's projectivity criterion for compact K\"ahler manifolds, and the fact that the $(0,q)$-Hodge numbers are bimeromorphic invariants, that $Z_t$ is Moishezon for each $t\in S_0$. In other words, in Corollary \ref{cor-local-moishezon-criterion-h20-zero}, the morphism $f$ has Moishezon general fibers.   Note that each fiber of a locally Moishezon morphism is Moishezon (e.g., \cite[Corollary 16]{Kol22}). 
		Thus Theorem \ref{thm-local-moishezon-criterion} and Corollary \ref{cor-local-moishezon-criterion-h20-zero}  provide special cases related to Koll{\'a}r's conjecture on degenerations of Moishezon varieties (\cite[Conjecture 3]{Kol22}) that  the special fiber of a certain nonsmooth family with Moishezon general fiber is Moishezon.
	\end{rem}

	In the case of a smooth family whose total space is Fujiki, the
	torsion-freeness condition is automatic. This yields the following consequence.
	
	\begin{cor}
		\label{thm-fujiki-local-moishezon-one-fiber-condition}
		Let $f:Z\to S$ be a smooth family (Definition \ref{defn-smoothfamily}) over a connected complex manifold $S$, where
		$Z$ is a Fujiki manifold. Denote $Z_t:=f^{-1}(t)$ for each $t\in S$. Assume that
		there exists a point $s\in S$ such that the restriction map
		\[
		H^0(Z,\Omega_Z^2)\to H^0(Z_s,\Omega_{Z_s}^2)
		\]
		is zero.  Then $f$ is locally
		Moishezon. In particular, for each positive integer $m$, the $m$-genus
		$P_m(Z_t)$ is independent of $t\in S$.
	\end{cor}
	
	\begin{proof}
		Since, for each $t\in S$, $Z_t$ is also a Fujiki manifold (\cite[Lemma 4.6]{Fu78/79}), $Z_t$ satisfies the
		$\partial\bar{\partial}$-lemma. Consequently,
		$H^i\left(Z_t, \mathbb{C}\right) \to H^i\left(Z_t, \mathcal{O}_{Z_t}\right)$
		is surjective for each $i\in \mathbb{N}$ and for each $t \in S$.
		Then the torsion-freeness condition in Theorem \ref{thm-local-moishezon-criterion} is valid by Lemma \ref{kollarfree-preli}, or directly by the invariance of Hodge numbers. Consequently, the local Moishezonness follows from Theorem \ref{thm-local-moishezon-criterion}. The invariance of $P_m(Z_t)$ follows from
		\cite[Theorem 1.1]{Tk07} or \cite[Main theorem 1.2-\rm{(i)}]{RT22}.
	\end{proof}

	Note that Theorem \ref{thm-local-moishezon-criterion}
	generalizes the bimeromorphic version of Campana’s local projectivity theorem (\cite[Theorem 1.1]{Ca20}):

	\begin{cor}
		\label{coro-local-moishezon-criterion-hyperfujiki-symplectic-form}
		Let $(Z,\sigma_Z)$ be a  hyperfujiki manifold,  \footnote{\label{hyperfujiki d-closed} Let $X$ be a hyperk\"ahler manifold, then a  Mukai-flop of $X$  is hyperfujiki (e.g., \cite[Example 5.1]{ACRT18}); Note that $\sigma_Z$ is automatically $d$-closed: for any $\alpha\in H^0(Z,\Omega_Z^p)$, the form $\partial\alpha$ is $d$-closed and $\partial$-exact, hence $\bar\partial$-exact by applying the $\partial\bar\partial$-lemma to the Fujiki manifold $Z$. It then follows from the  degree reason that $\partial\alpha=0$. Thus $d\alpha=0$. } in the sense that  $Z$ is simply connected,  Fujiki, and
		$H^0\left(Z, \Omega_Z^2\right)=\mathbb{C} \sigma_Z$
		for some everywhere non-degenerate holomorphic two-form $\sigma_Z$. 
		Let $S$ be a normal connected complex analytic space. Let
		$f:Z\to S$ be a proper surjective holomorphic map with connected fibers, and
		denote the fiber over $t\in S$ by $Z_t:=f^{-1}(t)$. Assume that there exists a
		point $s\in S_0$ ($S_0$ is as in Theorem \ref{thm-local-moishezon-criterion}) such that
		$\sigma_Z|_{Z_s}=0$. Then  $f$ is locally Moishezon.
	\end{cor}
	
	\begin{proof}
		Since $Z$ is hyperfujiki, 
		$H^0(Z,\Omega_Z^2)=\mathbb C\sigma_Z$ is $1$-dimensional. Therefore the condition
		$\sigma_Z|_{Z_s}=0$ implies that the restriction map
		\[
		H^0(Z,\Omega_Z^2)\to H^0(Z_s,\Omega_{Z_s}^2)
		\]
		is zero.

		Since $Z$ is hyperfujiki, 
		$\sigma_Z$ is everywhere non-degenerate. If $\dim Z=2n$, then
		$\sigma_Z^{\wedge n}$ trivializes $K_Z$. Thus
		$K_Z\cong \mathcal O_Z$.  Moreover, since $S$ is normal and connected, it is reduced and irreducible.
		Consequently, it follows from Lemma \ref{lem-torsion-free-hyperfujiki} that $R^2f_*\mathcal O_Z$ is torsion-free.
		Thus the application of   Theorem \ref{thm-local-moishezon-criterion} gives that $f$ is locally Moishezon.
	\end{proof}

	\section{Relatively ample line bundles and  local projectivity }\label{s projective-prime}

	We now turn from the local Moishezonness question to the local projectivity question, which provides a new proof and a generalization of Campana’s  local projectivity theorem (\cite{Ca20}) for Lagrangian fibrations from  hyperk\"ahler manifolds. The argument parallels, in some sense, the construction in Section \ref{sec-generic-relative-big-line-bundles}, except that the theory of non-K\"ahler loci for verifying bigness is replaced by the singular Demailly-P\u{a}un theorem of Collins–Tosatti for verifying ampleness.

	\begin{thm}\label{thm-local-relative-ample-line-bundle}
		Let $Z$ be a compact K\"ahler manifold, and let $S$ be a  locally
		irreducible
		and connected complex analytic space. Let  $f:Z\to S$
		be a proper surjective holomorphic map with connected    fibers, and denote the
		fiber over $t\in S$ by $Z_t:=f^{-1}(t)$. Assume that the following conditions
		hold:
		\begin{enumerate}[\rm{(}1\rm{)}]
			\item\label{condition-zero-one-fiber-ample} there exists a point $s\in S_0$ such
			that the restriction map
			\[
			H^0(Z,\Omega_Z^2)\to H^0(Z_s,\Omega_{Z_s}^2)
			\]
			is zero, where $S_0:=D\cap S_{\rm{reg}}$ (as in the proof of Lemma \ref{lem-zero-one-fiber-zero-all-fibers})  with  $S_{\rm{reg}}$  the smooth locus of $S$ and $D$  the maximal  analytic Zariski open subset of  $S$  such that $f$ is flat and submersive over $D$ (note that $S_0$ is analytically Zariski open in $S$);
			\item\label{condition-torsion-free-ample} $R^2f_*\mathcal O_Z$ is torsion-free.
		\end{enumerate}
		Then $f$ is locally projective, i.e.,  for each $t\in S$, there exists an  open neighbourhood
		$U_t\subseteq S$ of $t$ and a holomorphic line bundle $L_{U_t}$ on
		$f^{-1}(U_t)$
		such that $L_{U_t}$ is $f|_{f^{-1}(U_t)}$-ample. 
	\end{thm}
	
	As a direct corollary of Theorem \ref{thm-local-relative-ample-line-bundle}, we obtain Campana's local projectivity theorem. 
	
	\begin{cor}[{\cite[Theorem 1.1]{Ca20}}]
		Let $f: Z \to S$ be a Lagrangian fibration (Definition \ref{def-Lagran-fibra} ) from a compact connected hyperk\"ahler manifold onto a normal  projective variety $S$. Then $f$ is locally projective.
	\end{cor}
	
	\begin{proof}
		Recall that $\dim H^0(Z,\Omega_Z^2)=1$. By Definitions \ref{def-lagran} and \ref{def-Lagran-fibra}, for each point $s\in S_0$ ($S_0$ is as in Theorem \ref{thm-local-relative-ample-line-bundle}), the restriction map
		\[
		H^0(Z,\Omega_Z^2)\to H^0(Z_s,\Omega_{Z_s}^2)
		\]
		is zero. Since $Z$ is K\"ahler and $K_Z$ is trivial, Lemma \ref{lem-torsion-free-hyperfujiki} implies that $R^qf_*\mathcal O_Z$ is torsion-free for each $q\geq 0$. Therefore, $f$ is locally projective by Theorem \ref{thm-local-relative-ample-line-bundle}.
	\end{proof}

	In the case of a smooth family whose total space is compact K\"ahler, the
	torsion-freeness condition is automatic. This yields the following consequence.
	
	\begin{cor}\label{cor-local-relative-ample-line-bundle-smooth-family}
		Let $Z$ be a compact K\"ahler manifold, and let $S$ be a connected complex
		manifold. Let $f:Z\to S$ be a proper surjective holomorphic submersion with
		connected fibers, and denote the fiber over $t\in S$ by $Z_t:=f^{-1}(t)$.
		Assume that there exists a point
		$s\in S$ such that the restriction map
		\[
		H^0(Z,\Omega_Z^2)\to H^0(Z_s,\Omega_{Z_s}^2)
		\]
		is zero.
		Then $f$ is locally projective.
	\end{cor}
	
	\begin{proof}
		Since $Z$ is K\"ahler, each fiber of $f$ is K\"ahler. It then follows from Lemma \ref{kollarfree-preli} that  $R^i f_* \mathcal{O}_Z$ is locally free  for every $i\in \mathbb{N}$.
		Therefore, $f$ is locally projective by Theorem \ref{thm-local-relative-ample-line-bundle}.
	\end{proof}

	Note that the restriction-vanishing condition in Theorem \ref{thm-local-relative-ample-line-bundle} is satisfied, for instance, when $H^0(Z_s,\Omega_{Z_s}^2)=0$. In this case, $Z_s$ is projective by Kodaira's projectivity criterion for compact  K\"ahler  manifolds.
	Note also that even if each fiber of a smooth family $g:X\to D$ is projective, $g$ need not be locally projective (e.g., \cite[Introduction, Theorem 1-(1.3), Remark (1.5)]{Kol22b}). Thus, even in this special situation, Theorem \ref{thm-local-relative-ample-line-bundle} gives a nontrivial conclusion:

	\begin{cor}
		Let $Z$ be a compact K\"ahler manifold, and let $S$ be a connected complex
		manifold. Let $f:Z\to S$ be a proper surjective holomorphic submersion with
		connected fibers, and denote the fiber over $t\in S$ by $Z_t:=f^{-1}(t)$.
		Assume that there exists a point
		$s\in S$ such that $H^0(Z_s,\Omega_{Z_s}^2)=0.$
		Then $f$ is locally projective.
	\end{cor}
	
	For examples of Theorem \ref{thm-local-relative-ample-line-bundle}  in which neither $H^0(Z,\Omega_Z^2)$ nor $H^0(Z_s,\Omega_{Z_s}^2)$ vanishes, see Example \ref{ex-product-nonprojective-genus-one-k3} in the appendix.
	Now we prove Theorem \ref{thm-local-relative-ample-line-bundle}.

	\begin{proof}[Proof of Theorem \ref{thm-local-relative-ample-line-bundle}]
		Since $S$ is  locally irreducible and connected, it is irreducible.  Since $S$ is locally irreducible, it is reduced and thus $S_{\rm{reg}}$ is nonempty Zariski open in $S$. Consequently, it follows from  the generic smoothness theorem (e.g., \cite[Theorems 1.21, 1.22, 2.8]{PR94}) that   $S_0$ is nonempty Zariski open in $S$. 
		By condition \eqref{condition-zero-one-fiber-ample} and Lemma
		\ref{lem-zero-one-fiber-zero-all-fibers}, the restriction map
		\[
		H^0(Z,\Omega_Z^2)\longrightarrow H^0(Z_\tau,\Omega_{Z_\tau}^2)
		\]
		is zero for each $\tau\in S_0$.

		\begin{step}\label{step-cons of u and L}
			Construction of an integral $2$-class and a holomorphic line bundle
		\end{step}
		
		Here we employ the similar techniques as in Step 1 of the proof of Theorem \ref{thm-generic-relative-big-line-bundle}.
		Since $Z$ is compact K\"ahler, it satisfies
		the $\partial\bar{\partial}$-lemma. 
		Thus we have the
		decomposition  
		\[
		H^2(Z,\mathbb C)
		=
		H^{2,0}_{\rm dR}(Z)\oplus H^{1,1}_{\rm dR}(Z)\oplus H^{0,2}_{\rm dR}(Z)
		\]
		and the induced continuous projection 
		\[
		p_{1,1}:H^2(Z,\mathbb R)\to H^{1,1}_{\rm dR}(Z,\mathbb R),
		\]
		where the notations are as defined in Section \ref{section preli}.

		Let $\omega_0$ be a K\"ahler form on $Z$. Since the
		K\"ahler cone of $Z$ is open and $H^2(Z,\mathbb Q)_{\mathbb R}$ is dense in
		$H^2(Z,\mathbb R)$, we may choose a class $a\in H^2(Z,\mathbb Q)$ sufficiently
		close to $[\omega_0]$ such that $p_{1,1}(a_{\mathbb R})$ is a K\"ahler class. After multiplying
		$a$ by a positive integer, we obtain an integral  class
		$$u\in H^2(Z,\mathbb Z)$$
		such that $\beta:=p_{1,1}(u_{\mathbb R})$ is a K\"ahler class on $Z$.
		Set $\delta:=u_{\mathbb R}-\beta$. Then we have 
		\[
		\delta
		\in
		\bigl(H^{2,0}_{\rm dR}(Z)\oplus H^{0,2}_{\rm dR}(Z)\bigr)\cap H^2(Z,\mathbb R).
		\]

		As in the proof of Theorem \ref{thm-generic-relative-big-line-bundle}, the real
		vector space
		\[
		\bigl(H^{2,0}_{\rm dR}(Z)\oplus H^{0,2}_{\rm dR}(Z)\bigr)\cap H^2(Z,\mathbb R)
		\]
		is spanned by the real deRham cohomology classes $[\operatorname{Re}\eta_j]$ and
		$[\operatorname{Im}\eta_j]$, where $\eta_1,\ldots,\eta_m$ is a basis of
		$H^0(Z,\Omega_Z^2)$. Since $\eta|_{Z_\tau}=0$ for each
		$\eta\in H^0(Z,\Omega_Z^2)$ and each $\tau\in S_0$, we get $ \delta|_{Z_\tau}=0$
		for each $\tau\in S_0$.  Thus $ u_{\mathbb R}|_{Z_\tau}=\beta|_{Z_\tau}$
		in $H^2(Z_\tau,\mathbb R)$
		for each $\tau\in S_0$. In particular, $u|_{Z_\tau}$ is of type $(1,1)$ for
		each $\tau\in S_0$.  
		
		Fix a point $t\in S$. As in the proof of Theorem
		\ref{thm-generic-relative-big-line-bundle}, choose a connected Stein open
		neighbourhood $U\subseteq S$ of $t$. Set  $Z_U:=f^{-1}(U)$ and $ f_U:=f|_{Z_U}:Z_U\to U$.
		Since $U$ is locally irreducible and connected, $U$ is irreducible.  Let $D_{\rm{lf}}\subseteq U$ be the  locally free locus of $ R^2(f_U)_*\mathcal O_{Z_U}$, and set
		\[
		D_U:=U\cap S_0\cap D_{\rm{lf}}.
		\]
		Then $D_U$ is a dense analytic Zariski open subset of $U$. For each
		$\tau\in D_U$, the fiber $Z_\tau$ is smooth and $u|_{Z_\tau}$ is of type $(1,1)$.
		By condition
		\eqref{condition-torsion-free-ample}, $ R^2(f_U)_*\mathcal O_{Z_U}$
		is torsion-free.  Then we  apply Theorem \ref{rela 1-1 class} to $ f_U:Z_U\to U$ to 
		obtain a holomorphic line bundle $L_U$ on $Z_U$ such that  $ c_1(L_U)=u|_{Z_U}.$

		\begin{step}\label{step-relative-ampleness}
			Verification of relative ampleness via the construction of a positively curved metric
		\end{step}
		
		It remains to prove that $L_U$ is $f_U$-ample.   
		\footnote{In \cite{Ca20}, relative ampleness is verified by using    ``the relative version of Nakai-Moishezon criterion (in the version of Grauert, \cite{G62}, Satz 3, which does not presuppose algebraicity)".}  To get this, we will  use Collins--Tosatti's singular Demailly--P\u{a}un theorem \cite[Theorem 1.1]{CT16} to construct  a positively curved  metric of $L_U$ near any  fiber (possibly after  shrinking $U$).

		Let $h_U$ be a smooth Hermitian metric of $L_U$, and let $\alpha$ be its (normalized)
		curvature form such that 
		\[
		[\alpha]
		=
		c_1(L_U)_{\mathbb R}
		=
		u_{\mathbb R}|_{Z_U}
		\quad\text{in }H^2(Z_U,\mathbb R).
		\]
		Choose a K\"ahler form $\omega$ on $Z$ representing the K\"ahler class $\beta$.

		Let  $\tau\in U$ be  any fixed point.  We will apply the singular Demailly--P\u{a}un theorem
		of Collins--Tosatti \cite[Theorem 1.1]{CT16} to the compact analytic
		subvariety $(Z_\tau)_{\rm{red}}\subset Z_U$
		of the  K\"ahler manifold $(Z_U,\omega|_{Z_U})$. 
		
		Let $Y\subseteq (Z_\tau)_{\rm{red}}$
		be any positive-dimensional irreducible  subvariety, and set
		$d:=\dim Y$. Let $\nu:\widetilde Y\to Y$ be a resolution obtained as a finite composition of blow-ups. In particular, $\nu$ is projective.  Denote by
		\[
		\mu_U:\widetilde Y\xrightarrow{\nu}Y\hookrightarrow Z_U
		\]
		the induced holomorphic map, and by
		\[
		\mu:\widetilde Y\xrightarrow{\mu_U}Z_U\hookrightarrow Z
		\]
		the induced holomorphic map to $Z$.

		Recall from \cite[II, Proposition 1.3.1]{Vr89}   that projective morphisms are  K\"ahlerian; for a proper K\"ahler morphism, if  the base is  K\"ahler, then the total space over any relatively compact open subset of the base is also K\"ahler. Consequently,   $\widetilde Y$ is a compact K\"ahler manifold, because $Y$ is K\"ahler.
		
		Recall that $ \delta:=u_{\mathbb R}-\beta$.
		We first prove that   $\mu^*\delta=0$ in   $H^2(\widetilde Y,\mathbb R)$ (note that $\tau$ is not assumed to be in  $S_0$).
		
		Indeed, since $c_1(L_U)=u|_{Z_U}$, the class
		\[
		\mu^*u_{\mathbb R}
		=
		c_1(\mu_U^*L_U)_{\mathbb R}
		\]
		is of type $(1,1)$. The class $\mu^*\beta$ is also of type $(1,1)$. Thus
		$ \mu^*\delta$
		is of type $(1,1)$.
		
		On the other hand, by construction $\delta$ is a real linear combination of
		the classes $[\operatorname{Re}\eta_j]$ and $[\operatorname{Im}\eta_j]$, where
		$\eta_1,\ldots,\eta_m$ is a basis of $H^0(Z,\Omega_Z^2)$. For each $j$, the
		pullback $\theta_j:=\mu^*\eta_j$
		is a holomorphic $2$-form on $\widetilde Y$. Since $\widetilde Y$ is compact
		K\"ahler, the classes
		$ [\operatorname{Re}\theta_j],
		[\operatorname{Im}\theta_j]$
		belong to
		\[
		\bigl(H^{2,0}_{\rm dR}(\widetilde Y)\oplus H^{0,2}_{\rm dR}(\widetilde Y)\bigr)
		\cap H^2(\widetilde Y,\mathbb R).
		\]
		Since pullback commutes with taking real and imaginary parts, it follows that
		\[
		\mu^*\delta
		\in
		\bigl(H^{2,0}_{\rm dR}(\widetilde Y)\oplus H^{0,2}_{\rm dR}(\widetilde Y)\bigr)
		\cap H^2(\widetilde Y,\mathbb R).
		\]
		Thus $\mu^*\delta$ is simultaneously of type $(1,1)$ and of type
		$(2,0)+(0,2)$. By the Hodge decomposition on the compact K\"ahler manifold
		$\widetilde Y$, we obtain $ \mu^*\delta=0.$

		Consequently,
		\[
		[\mu_U^*\alpha]
		=
		\mu^*u_{\mathbb R}
		=
		\mu^*\beta
		=
		[\mu^*\omega]
		\quad\text{in }H^2(\widetilde Y,\mathbb R).
		\]
		Since all forms involved are closed and $\widetilde Y$ is compact, the following
		integrals depend only on their cohomology classes. Thus, for each integer
		$k$ with $1\leq k\leq d$, we have
		\[
		\int_Y \alpha^k\wedge \omega^{d-k}
		=
		\int_{\widetilde Y}
		\mu_U^*\alpha^k\wedge \mu^*\omega^{d-k}
		=
		\int_{\widetilde Y}
		\mu^*\omega^d
		>0.
		\]

		In conclusion, for each positive-dimensional irreducible compact analytic subvariety
		$Y\subseteq (Z_\tau)_{\rm{red}}$ and each integer $k$ with $1\leq k\leq \dim Y$, one has
		\[
		\int_Y \alpha^k\wedge \omega^{\dim Y-k}>0.
		\]
		Therefore the application of \cite[Theorem 1.1]{CT16} gives that there exist an open neighbourhood
		$W_\tau\subseteq Z_U$ of $(Z_\tau)_{\rm{red}}$ (and thus of $Z_\tau$, since both of them  have the same underlying topological space)  and a smooth function
		$\varphi_\tau$ on $W_\tau$ such that
		\[
		\alpha+\frac{\sqrt{-1}}{\pi}\partial\bar\partial\varphi_\tau
		\]
		is a K\"ahler form on $W_\tau$.

		We  define a new Hermitian metric $h'_U$ on $L_U|_{W_\tau}$ by
		\[
		|\bullet|_{h'_U}^2
		:=        |\bullet|_{h_U}^2 e^{-2\varphi_\tau}.
		\]
		Now the curvature  is $$c_1(L_U,h'_U)=\alpha+\frac{\sqrt{-1}}{\pi}\partial\bar\partial\varphi_\tau,$$ which 
		is a K\"ahler form on $W_\tau$. That is to say, 
		$L_U|_{W_\tau}$ has  strictly positive curvature. 
		In particular,  $L_U|_{(Z_{\tau})_{\rm{red}}}$ is
		positive.  Consequently, it follows from Grauert's embedding theorem (e.g.,
		\cite[p.343, proof of Satz 2]{G62} or \cite[Proposition 2.4]{CMM17}) for reduced
		compact complex spaces that $L_U|_{(Z_{\tau})_{\rm{red}}}$  is ample.   Recall from Lemma \ref{lemma-analytic-ample-reduction} that the ampleness of a holomorphic line bundle on a complex space can be checked on its reduction.  Then $L_U|_{Z_{\tau}}$ is ample.  Since $\tau\in U$ is arbitrary,  we obtain that $L_U$
		is $f_U$-ample (Definition \ref{def-proj morp}).
		This completes the proof.
	\end{proof}
	
	\appendix
	
	\section{Auxiliary results and examples}
	
	The following result should be well known, although we were unable to find a suitable reference.   We therefore include a proof  for the reader’s convenience.

	\begin{lemma}[= Lemma \ref{lem-big-chern-big-line}]
		\label{lem-big-chern-big-line-appen}
		Let $M$ be a Fujiki manifold and let $L$ be a holomorphic line bundle
		on $M$.  If $(c_1(L))_{\mathbb R}$ 
		is a big class, then $L$ is a big line bundle.
	\end{lemma}

	\begin{proof}
		Since $c_1(L)$ is big, it admits a K\"ahler current $T$, i.e., there exist a real  $d$-closed 
		$(1,1)$-current $T\in c_1(L)$, a Hermitian form $\omega$ on $M$, and a
		constant $\varepsilon>0$ such that
		$T\geq \varepsilon\omega$.   By Lemma \ref{ji-shiff-big}, for proving this lemma, it suffices to show that $L$ admits a singular Hermitian metric $h$ (in the sense of Demailly) such that
		the Chern curvature current $c_1(L,h)$ is equal to $T$.

		Choose a smooth Hermitian metric $h_0$  on $L$ and set $\theta_0=c_1(L,h_0).$  Since $M$ is Fujiki, it satisfies $\partial\bar\partial$-lemma.
		Then
		there exists (e.g., \cite[Lemma 5.0.12]{Ba15}) a real distribution $\varphi$ on $M$ such that
		\[
		T=\theta_0+\frac{\sqrt{-1}}{\pi}\partial\bar\partial\varphi .
		\]
		Moreover, the inequality $T\geq \varepsilon\omega$ gives
		\[
		\frac{\sqrt{-1}}{\pi}\partial\bar\partial\varphi
		\geq
		\varepsilon\omega-\theta_0 .
		\]
		Since $\varepsilon\omega-\theta_0$ is smooth, it follows that, on each
		coordinate ball, there is a constant $C>0$ such that
		\[
		\frac{\sqrt{-1}}{\pi}\partial\bar\partial\varphi
		\geq
		-C\frac{\sqrt{-1}}{\pi}\sum_j dz_j\wedge d\bar z_j .
		\]
		Thus $\varphi+C|z|^2$ is locally plurisubharmonic (i.e., $\varphi$ is quasi-plurisubharmonic). In particular $\varphi\in L^1_{\rm loc}(M)$.  Consequently, $h=h_0e^{-2 \varphi}$ is
		a singular Hermitian metric  (in the sense of Demailly)  of $L$.
		Its curvature current is
		\[
		c_1(L,h)
		=
		c_1(L,h_0)+\frac{\sqrt{-1}}{\pi}\partial\bar\partial\varphi
		=
		\theta_0+\frac{\sqrt{-1}}{\pi}\partial\bar\partial\varphi
		=
		T.
		\]
		This completes the proof.
	\end{proof}

	The next lemma shows that the ampleness of a holomorphic line bundle on a complex analytic space can be checked on its reduction.

	\begin{lemma}[{Analytic version of \cite[Proposition 1.2.16]{Lz04a}}]\label{lemma-analytic-ample-reduction}
		Let $X$ be a compact complex analytic space, let $\mathcal{L}$ be an invertible
		sheaf, and let
		$\iota:X_{\rm red}\hookrightarrow X$ be the reduction map. Put
		$\mathcal{L}_{\rm red}=\iota^{*}\mathcal{L}$. Then $\mathcal{L}$ is
		ample on $X$ if and only if $\mathcal{L}_{\rm red}$ is ample on
		$X_{\rm red}$. \footnote{Note that an ample line bundle on $X_{\rm red}$ may not be lifted to $X$ and the projectivity of $X_{\rm red}$ may not imply the projectivity of $X$.}
	\end{lemma}
	
	\begin{proof}
		The proof is the same as  in the scheme setting (\cite[Proposition 1.2.16]{Lz04a}),
		except that the tools for schemes are replaced by the corresponding analytic tools.
		
		First assume that $\mathcal{L}$ is ample on $X$. Then there is an integer
		$a>0$ such that $\mathcal{L}^{\otimes a}$ is very ample. Thus
		$\mathcal{L}^{\otimes a}$ is the pullback of $\mathcal{O}_{\mathbb{P}^{N}}(1)$
		under a closed embedding $X\hookrightarrow \mathbb{P}^{N}$ for some $N$.
		Restricting this closed embedding to the closed complex subspace
		$X_{\rm red}$, we see that
		$\mathcal{L}_{\rm red}^{\otimes a}$ is very ample on $X_{\rm red}$. Thus
		$\mathcal{L}_{\rm red}$ is ample.
		
		Conversely, assume that $\mathcal{L}_{\rm red}$ is ample on $X_{\rm red}$.
		By the analytic cohomological criterion for ampleness (e.g., \cite[Chapter IV, Theorem 4.1]{BS76}), it is enough to prove that for each
		coherent sheaf $\mathcal{F}$, there is an integer
		$m_{0}=m_{0}(\mathcal{F})$ such that
		\[
		H^{j}\bigl(X,\mathcal{F}\otimes_{\mathcal{O}_{X}}\mathcal{L}^{\otimes m}\bigr)=0
		\]
		for all $j>0$ and all $m\geq m_{0}$.

		Fix any coherent sheaf $\mathcal{F}$ on $X$, and let
		$\mathcal{N}\subseteq \mathcal{O}_{X}$ be the nilradical sheaf. Since $X$ is 
		compact,  there is
		an integer $r>0$ such that $\mathcal{N}^{r}=0$. Consider the filtration
		\[
		\mathcal{F}=\mathcal{N}^{0}\mathcal{F}\supseteq \mathcal{N}\mathcal{F}\supseteq
		\mathcal{N}^{2}\mathcal{F}\supseteq \cdots \supseteq
		\mathcal{N}^{r}\mathcal{F}=0 .
		\]
		For $0\leq i\leq r-1$, the quotient
		$\mathcal{Q}_{i}:=
		\mathcal{N}^{i}\mathcal{F}/\mathcal{N}^{i+1}\mathcal{F}$
		is annihilated by $\mathcal{N}$, and therefore is a coherent 
		$\mathcal{O}_{X_{\rm red}}$-module sheaf. 
		Consequently, by the ampleness of $\mathcal{L}_{\rm red}$, we  apply 
		the analytic Serre vanishing theorem (e.g., \cite[Chapter IV, Theorem 2.1]{BS76}) to obtain
		an integer  $m_0$ such that $$H^{j}\bigl(X,\mathcal{Q}_{i}\otimes_{\mathcal{O}_{X}}\mathcal{L}^{\otimes m}\bigr)=0$$
		for any
		$0\leq i\leq r-1$, any $j>0$, and any $m\geq m_{0}$.
		
		For each $i$, tensoring the exact sequence
		\[
		0\longrightarrow \mathcal{N}^{i+1}\mathcal{F}
		\longrightarrow \mathcal{N}^{i}\mathcal{F}
		\longrightarrow \mathcal{Q}_{i}
		\longrightarrow 0
		\]
		by the invertible sheaf $\mathcal{L}^{\otimes m}$ remains exact. Thus, for
		$m\geq m_{0}$, we have exact sequences
		\[
		0\longrightarrow
		\mathcal{N}^{i+1}\mathcal{F}\otimes_{\mathcal{O}_{X}}\mathcal{L}^{\otimes m}
		\longrightarrow
		\mathcal{N}^{i}\mathcal{F}\otimes_{\mathcal{O}_{X}}\mathcal{L}^{\otimes m}
		\longrightarrow
		\mathcal{Q}_{i}\otimes_{\mathcal{O}_{X}}\mathcal{L}^{\otimes m}
		\longrightarrow 0 .
		\]
		The right-hand term has no higher cohomology for $m\geq m_{0}$. Starting
		from $\mathcal{N}^{r}\mathcal{F}=0$ and applying the long exact cohomology
		sequences, a decreasing induction on $i$ gives
		\[
		H^{j}\bigl(X,
		\mathcal{N}^{i}\mathcal{F}\otimes_{\mathcal{O}_{X}}\mathcal{L}^{\otimes m}
		\bigr)=0
		\]
		for all $0\leq i\leq r$, all $j>0$, and all $m\geq m_{0}$. Taking $i=0$ gives
		\[
		H^{j}\bigl(X,\mathcal{F}\otimes_{\mathcal{O}_{X}}\mathcal{L}^{\otimes m}\bigr)=0
		\]
		for all $j>0$ and all $m\geq m_{0}$. Therefore $\mathcal{L}$ is ample by
		the analytic cohomological criterion for ampleness (\cite[Chapter IV, Theorem 4.1]{BS76}).
	\end{proof}

	We now  give an example of  Theorem \ref{thm-local-relative-ample-line-bundle} in which neither $H^0(Z,\Omega_Z^2)$ nor $H^0(Z_s,\Omega_{Z_s}^2)$ vanishes.  
	The construction is based on standard facts on complex  K$3$ surfaces.  Technical details are adapted from \cite[Chapter 2, \S 3]{Huy16} and \cite[Chapter 11, \S 1]{Huy16}.  
	
	\begin{ex}\label{ex-product-nonprojective-genus-one-k3}
		Let $X_1$ and $X_2$ be complex K3 surfaces such that  $\operatorname{Pic}(X_i)=\mathbb Z[F_i]$ and  $  F_i^2=0$, 
		where $F_i$ is a non-zero  primitive generator (\cite[Chapter 17, Remark 1.1]{Huy16}). 
		Since $\operatorname{Pic}(X_i)$ contains no class of positive
		self-intersection, $X_i$ is a non-projective K\"ahler manifold (\cite[Chapter 17, Remark 1.2]{Huy16}).
		
		Note that $K_{X_i}\cong\mathcal O_{X_i}$. Then it follows from the Hirzebruch--Riemann--Roch formula and the Serre duality that 
		$$h^0(X_i, F_i)+h^0\left(X_i, -F_i\right) \geq 2.$$  
		Recall that $F_i$ is trivial if and only if both $F_i$ and its dual
		$-F_i$ admit non-trivial global sections. Hence after replacing the generator $F_i$ by $-F_i$ if necessary, we may choose
		$F_i$ to be effective such that $h^0(X_i,-F_i)=0$ and   $ h^0(X_i,F_i)\geq 2.$
		
		\begin{step}
			$F_i$ is base point free.   
		\end{step}

		Note that $\operatorname{Bs}|F_i|$ has two possible parts: the fixed
		part and the isolated base point part.

		We first show that $\operatorname{Bs}|F_i|$ has no fixed part.
		Let $B$ be the fixed part of $|F_i|$. Suppose that $B\neq 0$. Then
		there exists a positive integer $a$ such that
		$\mathcal O_{X_i}(B)\cong F_i^a.$
		Since removing the fixed part does not change the space of sections, we
		have
		\[
		H^0(X_i,F_i-B)\cong H^0(X_i,F_i),
		\]
		and hence
		\[
		h^0(X_i,F_i-B)\geq 2.
		\]
		But
		\[
		F_i-B\cong F_i^{1-a}.
		\]
		This is impossible whether $a=1$ or $a\geq 2$. Thus $B=0$.
		
		We now prove that $\operatorname{Bs}|F_i|$ contains no isolated base point part.
		Since $|F_i|$ has no fixed part, $\operatorname{Bs}|F_i|$ consists only of at most
		finitely many isolated points. Suppose that $\operatorname{Bs}|F_i|$
		contains an isolated point $p$. Choose two general members
		$D_1,D_2\in |F_i|$ with no common irreducible component. Then
		$p\in D_1\cap D_2$, and the local intersection multiplicity satisfies   $ i_p(D_1,D_2)>0.$
		Thus we obtain 
		\[
		D_1\cdot D_2
		=
		\sum_{x\in D_1\cap D_2}i_x(D_1,D_2)
		\geq i_p(D_1,D_2)>0,
		\]
		which contradicts $F_i^2=0$. Hence $|F_i|$ has no isolated base
		points.
		In conclusion, $|F_i|$ is base point free.

		\begin{step}
			The Kodaira map associated with $|F_i|$ is a fibration onto $\mathbb P^1$ 
		\end{step}

		Set $N_i:=h^0(X_i,F_i)-1$ and 
		let
		\[
		\varphi_i:=\varphi_{|F_i|}:X_i\to \mathbb P^{N_i}
		\]
		be the corresponding Kodaira map which is a morphism by the above claim.

		Since 
		\[
		\mathcal O_{X_i}(F_i)\cong
		\varphi_i^*\mathcal O_{\mathbb P^{N_i}}(1)
		\]
		is not trivial,  the image $Y_i:=\varphi_i(X_i)$  is not a single point.
		On the other hand, if
		$\dim \varphi_i(X_i)=2$, then $\varphi_i:X_i\to \varphi_i(X_i)$ is
		generically finite. If $H_i$ denotes the hyperplane class on
		$\varphi_i(X_i)$, then
		\[
		F_i^2
		=
		\bigl(\varphi_i^*H_i\bigr)^2
		=
		\deg(\varphi_i)\, H_i^2>0,
		\]
		which contradicts $F_i^2=0$. In conclusion, the image
		$Y_i$
		is a curve.

		Taking the Stein factorization  of $X_i\to Y_i$ to obtain
		\[
		X_i\xrightarrow{g_i}B_i\xrightarrow{\nu_i}Y_i,
		\]
		where $g_i$ has connected fibers, $\nu_i$ is finite, and ${g_i}_*\mathcal O_{X_i}=\mathcal O_{B_i}$  (e.g., \cite[p. 213]{GR84}).
		
		Since $X_i$ is smooth, it follows from ${g_i}_*\mathcal O_{X_i}=\mathcal O_{B_i}$ that $B_i$ is normal and thus smooth. Moreover,
		the Leray spectral sequence gives an injection
		$$H^1\left(B_i, g_{i *} \mathcal{O}_{X_i}\right) \hookrightarrow H^1\left(X_i, \mathcal{O}_{X_i}\right)$$
		and thus an injection
		$$ H^1(B_i,\mathcal O_{B_i})
		\hookrightarrow
		H^1(X_i,\mathcal O_{X_i})=0.$$
		Thus the smooth curvve $B_i$ has genus zero, and hence $B_i\cong\mathbb P^1.$

		Let $ A_i:=\nu_i^*\mathcal O_{Y_i}(1).$
		Then $ \mathcal O_{X_i}(F_i)\cong g_i^*A_i.$
		Since $B_i\cong\mathbb P^1$, there exists an integer $m_i>0$ such that
		$ A_i\cong\mathcal O_{\mathbb P^1}(m_i).$
		If $G_i$ is a general fiber of $g_i$, then
		\[
		g_i^*\mathcal O_{\mathbb P^1}(1)\cong\mathcal O_{X_i}(G_i),
		\]
		and therefore $  F_i\sim m_iG_i.$
		By the projection formula and ${g_i}_*\mathcal O_{X_i}=\mathcal O_{\mathbb P^1}$,
		we have
		\[
		H^0(X_i,F_i)
		\cong
		H^0(\mathbb P^1,\mathcal O_{\mathbb P^1}(m_i)),
		\]
		i.e., $h^0(X_i,F_i)=m_i+1$.
		Since $F_i$ is primitive,  we obtain that 
		$m_i=1$.   As a result, $\operatorname{deg} \nu_i=1$ and thus $\nu_i$ is isomorphic.  Moreover, 
		$ h^0(X_i,F_i)=2$ and $N_i=1$.
		In conclusion, $Y_i=\mathbb{P}^1$, and 
		$\varphi_i$ coincides with $g_i$ (up to an automorphism of $\mathbb P^1$).
		We denote it by
		\[
		\pi_i:X_i\to\mathbb P^1.
		\]
		Clearly, each fiber of $\pi_i$ is connected.

		Finally, for a general fiber $G_i$ of $\pi_i$, since any two points in $\mathbb{P}^1$ are linearly equivalent, we obtain
		$G_i^2=0$ (two distinct fibers are disjoint).
		Since $K_{X_i}\cong\mathcal O_{X_i}$, by the adjunction formula, $2g(G_i)-2=G_i^2=0.$  
		Thus $g(G_i)=1$.   That is to say, $\pi_i: X_i\to \mathbb{P}^1$ is an  elliptic K$3$ surface (\cite[Chapter 11, Definition 1.1]{Huy16}). Consequently,  $\pi_i$ has singular fibers (\cite[Chapter 11, Remark 1.5(ii)]{Huy16}).
		
		\begin{step}
			Construction of the desired morphism $f$ satisfying the conditions of Theorem  \ref{thm-local-relative-ample-line-bundle}
		\end{step}

		Define
		\[
		Z:=X_1\times X_2,
		S:=\mathbb P^1\times\mathbb P^1   \text{ and } f:=\pi_1\times\pi_2:Z\to S.
		\]
		Then $Z$ is a compact  non-projective K\"ahler manifold. Since
		$H^0(X_i,\Omega_{X_i}^1)=0$,  the K\"unneth formula gives
		$$H^0(Z,\Omega_Z^2)\cong \operatorname{pr}_1^*H^0(X_1,\Omega_{X_1}^2)
		\oplus
		\operatorname{pr}_2^*H^0(X_2,\Omega_{X_2}^2).$$
		Consequently, $H^0(Z,\Omega_Z^2)$ is $2$-dimensional.

		Note that
		\[
		K_Z
		\cong
		\operatorname{pr}_1^*K_{X_1}\otimes
		\operatorname{pr}_2^*K_{X_2}
		\cong
		\mathcal O_Z.
		\]
		By Lemma \ref{lem-torsion-free-hyperfujiki}, the torsion-freeness
		hypothesis in Theorem \ref{thm-local-relative-ample-line-bundle} is
		satisfied in the present situation.
		
		Let $S_0\subset S$ be the open subset consisting of points
		$s=(s_1,s_2)$ such that both fibers $(X_1)_{s_1}$ and $(X_2)_{s_2}$ are
		smooth. For $s\in S_0$, write $E_i:=(X_i)_{s_i}.$
		Then  $ Z_s=E_1\times E_2$.
		Since  $E_i$ are curves, $\Omega_{E_i}^2=0$. Applying the K\"unneth
		formula again gives
		\[
		H^0(Z_s,\Omega_{Z_s}^2)
		\cong
		H^0(E_1,\Omega_{E_1}^1)
		\otimes
		H^0(E_2,\Omega_{E_2}^1).
		\]
		Since  $E_i$ are of genus-one, $h^0(E_i,\Omega_{E_i}^1)=1$.
		Therefore  $h^{2,0}(Z_s)=1.$

		We now verify the restriction-vanishing condition in Theorem
		\ref{thm-local-relative-ample-line-bundle}. Let $\sigma_i$ be a
		generator of $H^0(X_i,\Omega_{X_i}^2)$. Then  $  \operatorname{pr}_1^*\sigma_1,
		\operatorname{pr}_2^*\sigma_2$
		form a basis of $H^0(Z,\Omega_Z^2)$. Since $E_i$ is a curve, the
		pullback of any holomorphic $2$-form to $E_i$ is zero. Thus  $\operatorname{pr}_1^*\sigma_1|_{Z_s}=0$ and  $\operatorname{pr}_2^*\sigma_2|_{Z_s}=0.$
		Consequently, the restriction map
		\[
		H^0(Z,\Omega_Z^2)\to H^0(Z_s,\Omega_{Z_s}^2)
		\]
		is zero for each $s\in S_0$.
		
		In conclusion, $f$ satisfies the hypotheses of Theorem
		\ref{thm-local-relative-ample-line-bundle}.   Furthermore, neither $H^0(Z,\Omega_Z^2)$ nor $H^0(Z_s,\Omega_{Z_s}^2)$ vanishes.

	\end{ex}

	\section*{Acknowledgement}
	
	The author would like to express his gratitude to  Professors Sheng Rao and  I-Hsun Tsai for their many valuable discussions on topics related to this paper over the years.  He also extends his thanks to Professor  Fr\'{e}d\'{e}ric Campana for answering  a question on \cite{Ca20}.

\end{document}